\newtheorem{theorem}{Theorem}[section]
\newtheorem{proposition}[theorem]{Proposition}
\newcounter{bean}
\newenvironment{romanlist}{\begin{list}{\rm ({\roman{bean}})}
      {\usecounter{bean}\setlength{\rightmargin}{\leftmargin}}}
      {\end{list}}
\newcommand{\seqm}[3]{\ensuremath{#1\stackrel{#2}
 {\longrightarrow}#3}}
\newcommand{\seqmm}[5]{\ensuremath{#1\stackrel{#2}
 {\longrightarrow}#3\stackrel{#4}{\longrightarrow}#5}}
\newcommand{\seqmmm}[7]{\ensuremath{#1\stackrel{#2}
 {\longrightarrow}#3\stackrel{#4}{\longrightarrow}#5
  \stackrel{#6}{\longrightarrow}#7}}
\newcommand{\seqmmmm}[9]{\ensuremath{#1\stackrel{#2}
 {\longrightarrow}#3\stackrel{#4}{\longrightarrow}#5
  \stackrel{#6}{\longrightarrow}#7
  \stackrel{#8}{\longrightarrow}#9}}
\newcommand{\wcolon}{\ensuremath{\,\colon\,}}
\newcommand{\bracket}[1]{\ensuremath{\left( #1 \right)}}
\newcommand{\vbracket}[1]{\ensuremath{\left\langle#1\right\rangle}}
\newcommand{\cplus}[3]{\displaystyle\bigoplus^{#2}_{#1}#3}
\newcommand{\csum}[3]{\displaystyle\sum^{#2}_{#1}#3}
\newcommand{\qqed}{\hfill\square}
\newcommand{\zmodp}{\ensuremath{\mathbb{Z}_{p}}}
\newcommand{\mc}[1]{\ensuremath{\mathcal{#1}}}
\newcommand{\mb}[1]{\ensuremath{\mathbb{#1}}}
\newcommand{\Z}{\ensuremath{\mathbb{Z}}}
\newcommand{\Q}{\ensuremath{\mathbb{Q}}}
\newcommand{\ID}{\ensuremath{\mathbbm{1}}}
\newcommand{\im}{\ensuremath{\mbox{Im }}}
\DeclareMathOperator{\Rank}{rank}
\begin{document}

\title{The Free Loop Space Homology of $(n-1)$-connected $2n$-manifolds}

\author{Piotr Beben}
\address{\scriptsize{School of Mathematics, University of Southampton, Southampton SO17 1BJ, United Kingdom}} 
\email{P.D.Beben@soton.ac.uk} 
\author{Nora Seeliger} 
\address{\scriptsize{Max-Planck-Institut f\"ur Mathematik, Vivatsgasse 7, D-53111 Bonn}}  
\email{seeliger@mpim-bonn.mpg.de} 

\subjclass[2010]{Primary 55P35, 57N65, Secondary 55T10}
\keywords{string topology, free loop space, highly connected manifolds.}

\begin{abstract}
Our goal in this paper is to compute the integral free loop space homology of $(n-1)$-connected $2n$-manifolds $M$, $n\geq 2$.
We do this when $n\neq 2,4,8$, or when $n\neq 2$ and $\tilde H^*(M)$ has trivial cup product squares, 
though the techniques used here should extend to a much wider range of manifolds.
We also give partial information concerning the action of the Batalin-Vilkovisky operator.
\end{abstract}

\maketitle
\section{Introduction}

Let $\mc L X=map(S^1,X)$ denote the free loop space on $X$. This space comes equipped with an action
$\nu\colon\seqm{S^1\times\mc L X}{}{\mc L X}$ that rotates loops, and an induced degree $1$ homomorphism 
$$
\Delta\wcolon\seqm{H_*(\mc L X)}{}{H_{*+1}(\mc L X)}
$$
known as the \emph{BV-operator}, defined by setting $\Delta(a)=\nu_*([S^1]\otimes a)$.
In addition Chas and Sullivan~\cite{ChasSullivan} constructed a pairing 
$$
\seqm{H_p(\mc L X)\otimes H_q(\mc L X)}{}{H_{p+q-d}(\mc L X)} 
$$
on a closed oriented $d$-manifold $X$ that (together with the \emph{BV-operator}) turns the shifted homology 
$\mb{H}_*(\mc L X)=H_{*+d}(\mc L X)$ into a Batalin-Vilkovisky (BV)-algebra. 

Batalin-Vilkovisky algebras have been computed in only a few special cases. 
One of the more general results 
to date (due to Felix and Thomas~\cite{FTString}) states that over a field $F$ of characteristic zero and $1$-connected $X$, 
$\mb{H}_*(\mc L X;F)$ is isomorphic to a BV-algebra structure defined on the Hochschild cohomology $HH^*(C^*(X),C^*(X))$. 
Unfortunately, this theorem is generally not true for fields with nonzero characteristic~\cite{Menichi1}. 
Beyond these results, the BV-algebra over various coefficient rings has been completely determined for 
spheres~\cite{CJY,Westerland,Menichi1}, certain Stiefel manifolds~\cite{Tam06}, Lie groups~\cite{Hepworth1}, 
and projective spaces~\cite{Ziller,Seeliger,CJY,Tian,Hepworth2}, 
using a mixture of techniques ranging from homotopy theoretic to geometric, as well as the well-known connections to Hochschild cohomology. 

In this paper we focus on the free loop space homology of highly connected $2n$-manifolds, together with 
the action of the BV-operator. The coefficient ring $R$ for homology and cohomology is assumed to be either 
any field, or the integers $\Z$, but we suppress it from notation most of the time.  
Fix $n\geq 2$, $M$ a $(n-1)$-connected, closed, oriented $2n$-manifold with $H^{n}(M)$ of rank $m\geq 1$.
Let 
$$
C=[c_{ij}=\vbracket{a_i\cup a_j,[M]}]
$$ 
be the $m\times m$ matrix for the intersection form \seqm{H^{n}(M)\times H^{n}(M)}{}{\mb Z} with respect to some 
choice of basis $\{a_1,\ldots,a_m\}$ for $H^{n}(M)$ (we use the same notation for the dual basis of $H^{n}(M)$). 
This form is nonsingular, symmetric when $n$ is even, and skew-symmetric when $n$ is odd.

Denote $H^{n}(M)$ and $H^{2n}(M)\cong\mb Z$ by the free graded modules $R$-modules $A=R\{a_1,\ldots,a_m\}$ and $K=R\{[M]\}$, 
and the desuspension of $A$ by $V=R\{u_1,\ldots,u_m\}$ with $|u_i|=n-1$. Let
$$
T(V)=R\oplus \cplus{i\geq 1}{}{V^{\otimes i}}
$$
be the free tensor algebra generated by $V$, and $I$ be the two-sided ideal of the tensor algebra $T(V)$
generated by the following degree $2n-2$ element 
$$
\chi=\csum{i<j}{}{c_{ij}[u_i,u_j]}+\csum{i}{}{c_{ii}u_i^2},
$$
where $[x,y]=xy-(-1)^{|x||y|}yx$ denotes the graded Lie bracket in $T(V)$. Take the quotient algebra
$$
U=\frac{T(V)}{I}
$$ 
and the degree $-1$ maps of graded $R$-modules $d\colon\seqm{A\otimes U}{}{U}$ and 
$d'\colon\seqm{K\otimes U}{}{A\otimes U}$, which are given for any $y\in U$ by the formulas
$$
d(a_i\otimes y)= [u_i,y]
$$
$$
d'([M]\otimes y)=\csum{i,j}{}{c_{ij} (a_j\otimes [u_i,y])}.
$$

If we apply the Jacobi identity to the summands $c_{ij}(a_j\otimes [u_i,y])$ in $d\circ d'(y)$ for $i<j$ 
(keeping in mind that $c_{ij}=(-1)^{n}c_{ji}$, $[u_i,[u_i,y]]=[u_i^2,y]$, and that products with $\chi$ 
are identified with zero in $U$), we see that $\im d'\subseteq \ker d$, so we obtain a chain complex
$$
\seqmmmm{0}{}{K\otimes U}{d'}{A\otimes U}{d}{U}{}{0}. 
$$
Now take the homology of this chain complex. That is, take the following graded $R$-modules:
$$
\mc Q=\frac{U}{\im d},\quad
\mc W=\frac{\ker d}{\im d'},\quad
\mc Z=\ker d'.
$$
One can think of $\mc W$ by first taking the $R$-submodule $W'$ of $\Sigma^{-1} A\otimes T(V)\cong T(V)$ 
generated by elements that are invariant modulo $I$ under graded cyclic permutations, that is, invariant 
after projecting to $U$. Then $\mc W$ is the projection of $\Sigma W'$ onto $(A\otimes U)/\im d'$.

Our main result is that the homology of this chain complex is the integral free loop space homology of $M$ under some conditions:

\begin{theorem}
\label{TMain1}
Suppose $n\geq 2$ and $\Rank H^{n}(M)\geq 1$. 
If $n\neq 2,4,8$, or $n\neq 2$ and $\tilde H^*(M)$ has trivial cup product squares, 
then there exists an isomorphism of graded $R$-modules
$$
H_*(\mc L M)\cong \mc Q\oplus \mc W\oplus \mc Z.
$$  
\end{theorem}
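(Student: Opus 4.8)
The plan is to realize the algebraic chain complex geometrically via a fibration and a cofibration, computing $H_*(\mc L M)$ through the Cohen–Jones type pullback description of the free loop space. First I would recall that for a closed manifold $M$, there is a homotopy pullback square expressing $\mc L M$ in terms of the based loop space $\Omega M$ and the evaluation fibration $\seqm{\Omega M}{}{\mc L M}{\to}{M}$. Under the hypotheses $n\geq 2$, $n\neq 2,4,8$, the manifold $M$ is homotopy equivalent to a CW complex of the form $(\cvee{i=1}{m}{S^n})\cup_f e^{2n}$, where the attaching map $f$ is determined by the intersection form $C$; crucially, the condition $n\neq 2,4,8$ ensures (via the Hopf invariant one theorem) that there are no exotic quadratic contributions, so $f$ is a sum of Whitehead products $\sum_{i<j}c_{ij}[\iota_i,\iota_j] + \sum_i c_{ii}[\iota_i,\iota_i]$ (up to the usual $2$-torsion subtleties absorbed into the bracket notation). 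This is precisely why $\chi$ has the stated form.

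Next I would compute $H_*(\Omega M)$. Since $M$ is a two-cell complex with attaching map a sum of Whitehead products, the James/Bott–Samelson description gives $H_*(\Omega M)$ as the quotient of the tensor algebra $T(V)$ — with $V = \Sigma^{-1}\widetilde{H}_*(\cvee S^n) = R\{u_1,\ldots,u_m\}$ — by the two-sided ideal generated by the image of the top cell's attaching map under the loop-space adjunction, which sends the Whitehead product $[\iota_i,\iota_j]$ to the commutator $[u_i,u_j]$. Hence $H_*(\Omega M)\cong U = T(V)/I$ as an algebra. This identifies the ``fiber'' term. The $M$-factor and the attaching cell then contribute the modules $A\otimes U$ and $K\otimes U$ respectively, and the maps $d$, $d'$ arise as the relevant differentials in the homology spectral sequence (or the cofiber long exact sequence) of $\mc L M$ built from the cofibration $\cvee S^n \hookrightarrow M \to S^{2n}$: pulling this back along evaluation gives a cofibration of free-loop-type spaces whose homology long exact sequence has connecting maps that one identifies, via the coproduct structure and the Whitehead-product formula above, with $d$ and $d'$ as written. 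The module $\mc W$, being built from cyclic invariants, reflects the ``untwisted sector'' coming from $\Sigma W' \subseteq \mc L(\cvee S^n)$ that survives into $\mc L M$.

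The technical heart is to show that the relevant spectral sequence (the Serre spectral sequence of $\seqm{\Omega M}{}{\mc L M}{\to}{M}$, or equivalently the Cohen–Jones chain-level model) collapses after the single differential encoded by $d$ and $d'$, so that $H_*(\mc L M)$ is exactly the homology of the three-term complex, with no further extension ambiguity over $R$. For this I would use the fact that $\mc L M$ splits (up to homotopy, after suitable localization) as a pullback whose homology can be assembled from $H_*(\Omega M)\cong U$ acting on the cells of $M$; the absence of higher differentials follows because the only nontrivial attaching data lives in a single dimension $2n-2$ (coming from the single top cell of $M$ desuspended once), and degree reasons prevent any $d_{\geq 2}$ differential from the base. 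The freeness of all modules involved over $R$ (a field or $\Z$) kills extension problems, giving the direct sum decomposition rather than merely a filtration.

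The main obstacle I expect is the precise identification of the differential $d'$ — the secondary operator coming from the top cell — with the stated formula $d'([M]\otimes y) = \sum_{i,j}c_{ij}(a_j\otimes[u_i,y])$, including getting the signs right (the interplay of $c_{ij}=(-1)^n c_{ji}$ with the graded commutators and the Koszul signs in the cyclic-invariance description of $\mc W$). This requires carefully tracking the coproduct on $H_*(\Omega(\cvee S^n))$ through the adjunction and the pullback, and verifying the Jacobi-identity computation sketched in the excerpt so that $\im d'\subseteq\ker d$ holds on the nose. A secondary difficulty is handling the edge cases $m=1$ and the low-degree overlaps, where $\mc Q$, $\mc W$, $\mc Z$ may share degrees; here one checks directly that the three contributions remain internally direct-summable. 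Once these are pinned down, Theorem~\ref{TMain1} follows by assembling the homology of the chain complex $\seqmmmm{0}{}{K\otimes U}{d'}{A\otimes U}{d}{U}{}{0}$ with the geometric computation of $H_*(\mc L M)$.
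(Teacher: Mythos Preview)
Your overall architecture---run the Serre spectral sequence for the evaluation fibration $\Omega M\to\mc L M\to M$, identify $H_*(\Omega M)\cong U$, match the $\delta^n$-differentials with $d$ and $d'$, then argue collapse and no extensions---is exactly what the paper does. Two places in your write-up, however, do not yet close.

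First, the identification of the differentials. You say $d$ and $d'$ ``arise'' from coproduct/Whitehead data or from a cofibration long exact sequence, but you give no mechanism that actually produces the formula $\delta^n(a_i\otimes y)=-1\otimes[u_i,y]$ for arbitrary $y\in U$, much less the corresponding formula for $[M]\otimes y$. The free loop fibration is not a principal fibration in the usual sense, so the differentials are not a priori $H_*(\Omega M)$-linear; one genuinely needs an argument. The paper supplies one: it shows (Proposition~\ref{PAction}/\ref{PDiff}) that for any fibration $F\to X\to B$ with $F$ an $H$-space, the differentials in the Serre spectral sequence can be read off from those of the path--loop fibration of $B$ together with the holonomy action $\theta\colon\Omega B\times F\to F$. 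For the free loop fibration this action is conjugation, so $\theta_*(v\bar\otimes y)=-[v,y]$ on primitives $v$, which is precisely what makes the bracket appear. Without this or an equivalent argument, your step ``the maps $d,d'$ arise as the relevant differentials'' is an assertion, not a proof.

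Second, and more seriously, your extension argument is wrong. You write that ``the freeness of all modules involved over $R$ \ldots\ kills extension problems,'' but $\mc Q=U/\im d$ and $\mc W=\ker d/\im d'$ are quotients and generally have torsion over $\Z$ (the paper says so explicitly). So the $E^\infty$ page is not a direct sum of free modules, and freeness cannot be what resolves the extensions. The paper instead uses degree placement: for $n>3$ the summands $\mc Q$, $\mc W$, $\mc Z$ live in pairwise disjoint total degrees (multiples of $n-1$, $n$ plus multiples of $n-1$, and $2n$ plus multiples of $n-1$), so the filtration splits trivially; the residual overlap at $n=3$ is between $\mc Q$ and the torsion-free $\mc Z$, which again forces splitting. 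You flag ``low-degree overlaps'' as a secondary difficulty, but this degree analysis is in fact the entire content of the extension step, and it, not freeness, is what you need to invoke.
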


The restriction away from $2$,$4$, and $8$ traces back to an argument that we use to determine $H_*(\Omega M)$,
which does not apply to situation where there are cup product squares equal to the fundamental class $[M]$, or $-[M]$.
Failure of a degree placement argument to compute certain differentials is another reason that we restrict 
away from $n=2$.

We also determine the action of the BV-operator on $H_*(\mc L M;\Q)$, in a sense, up-to-abelianization of $U$ when $n>3$ is 
odd. 

Consider the graded abelianization map \seqm{T(V)}{\eta}{S(V)}, where $S(V)$ is the free graded symmetric algebra 
generated by $V$. Since $\eta(\chi)=0$, $\eta$ factors through \seqm{U}{\eta}{S(V)}. Also, consider the maps 
\seqm{A\otimes U}{\ID_A\otimes\eta}{A\otimes S(V)} and \seqm{K\otimes U}{\ID_K\otimes\eta}{K\otimes S(V)}. Since 
$(\ID_A\otimes \eta)\circ d'=0$ and $\eta\circ d=0$, then $\eta$ and these two maps induce \emph{abelianization} maps 
$$
\seqm{\mc Q}{\eta_q}{S(V)},
$$
$$
\seqm{\mc W}{\eta_w}{A\otimes S(V)},
$$
$$
\seqm{\mc Z}{\eta_z}{K\otimes S(V)}.
$$
\begin{theorem}
\label{TMain2}
Let $n>3$ be odd. The BV operator $\Delta\colon\seqm{H_*(\mc L M;\Q)}{}{H_{*+1}(\mc L M;\Q)}$ satisfies 
$\Delta(\mc Q)\subseteq \mc W$ and $\Delta(\mc W)\subseteq \mc Z$, and $\Delta(\mc Z)=\{0\}$. Moreover,
the composite \seqmm{\mc Q}{\Delta}{\mc W}{\eta_w}{A\otimes S(V)} is given by
$$
\eta_w\circ\Delta(1\otimes(u_{i_1}\cdots u_{i_k}))=
\csum{j=1}{k}{a_{i_j}\otimes(u_{i_1}\cdots u_{i_j-1}u_{i_j+1}\cdots u_{i_k})},
$$
and \seqmm{\mc W}{\Delta}{\mc Z}{\eta_z}{K\otimes S(V)} is the restriction to $\ker d$ 
of the map \seqm{(A\otimes U)/\im d'}{}{S(A)\otimes S(V)} given by
$$
a_i\otimes(u_{i_1}\cdots u_{i_k})\mapsto
\csum{j=1}{k}{a_i a_{i_j}\otimes(u_{i_1}\cdots u_{i_j-1}u_{i_j+1}\cdots u_{i_k})},
$$
where $[M]\in K$ is identified with $(\sum_{i\leq j}c_{ij}a_ia_j)\in S(A)$.
\end{theorem}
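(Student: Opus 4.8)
The plan is to build on the description of $H_*(\mathcal L M)$ from Theorem~\ref{TMain1} as the homology of the chain complex $0 \to K\otimes U \xrightarrow{d'} A\otimes U \xrightarrow{d} U \to 0$, and to identify this chain complex, rationally, with (a direct summand of) the Hochschild complex of a suitable model of $C^*(M)$, or more directly with the Chas--Sullivan/Jones-type model for $\mathcal L M$ coming from a minimal Sullivan model or a Hopf-algebra cobar construction on $H_*(\Omega M;\Q)$. Since $n>3$ is odd, $H_*(\Omega M;\Q)$ is the free graded-commutative-corrected algebra $U$ generated in the odd degree $n-1$ subject to the single quadratic relation $\chi$, so the free loop space splits (rationally) as $\mathcal L M \simeq_{\Q} \Omega M \times M$ only in the trivial cases; in general one uses the Jones isomorphism $H_*(\mathcal L M;\Q)\cong HH_*(C_*(\Omega M;\Q))$ together with the action of $H_*(M)$. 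The BV-operator $\Delta$ in this model is Connes' $B$-operator on the cyclic bar complex, and the first reduction is to recall (or re-derive) that under the tri-graded decomposition $\mathcal Q \oplus \mathcal W \oplus \mathcal Z$ — which is the homological grading by ``number of $A$-tensor factors'' $0,1,2$ — Connes' $B$ raises this count by exactly one, giving at once $\Delta(\mathcal Q)\subseteq\mathcal W$, $\Delta(\mathcal W)\subseteq\mathcal Z$, and $\Delta(\mathcal Z)=0$ (there is no degree-$3$ part).

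Next I would compute the two nontrivial components explicitly on representative cycles. For $\Delta\colon\mathcal Q\to\mathcal W$: a class in $\mathcal Q=U/\operatorname{im} d$ is represented by a monomial $u_{i_1}\cdots u_{i_k}\in U$ (placed in tensor degree $0$, i.e.\ with an empty $A$-factor), and Connes' $B$ applied to such an element produces, up to the usual sign and cyclic-permutation ambiguity, the sum over $j$ of the ``marked'' monomial $a_{i_j}\otimes(u_{i_1}\cdots\widehat{u_{i_j}}\cdots u_{i_k})$, where the hat means deletion and the cyclic reordering is absorbed. After composing with the abelianization $\eta_w\colon\mathcal W\to A\otimes S(V)$, all the cyclic-permutation corrections and the graded commutators collapse to the symmetric-algebra monomial, yielding exactly the stated formula $\eta_w\circ\Delta(1\otimes u_{i_1}\cdots u_{i_k})=\sum_{j=1}^{k} a_{i_j}\otimes(u_{i_1}\cdots u_{i_j-1}u_{i_j+1}\cdots u_{i_k})$. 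For $\Delta\colon\mathcal W\to\mathcal Z$ I would do the same one step higher: on a cycle $a_i\otimes(u_{i_1}\cdots u_{i_k})$ representing a class in $\ker d$, Connes' $B$ marks one more of the $u$'s, producing $\sum_j a_i a_{i_j}\otimes(\cdots\widehat{u_{i_j}}\cdots)$ after abelianizing the pair of $A$-factors into $S(A)$; the only subtlety is that $\mathcal Z=\ker d'$ is isomorphic to $K\otimes U$ via the fundamental class, and matching up $\eta_z$ with the map into $S(A)\otimes S(V)$ forces the identification $[M]=\sum_{i\le j}c_{ij}a_i a_j$ in $S(A)$ — which is precisely how the intersection form $C$ re-enters, since $[M]$ is the image of the fundamental class under the cup-product square $H^n\otimes H^n\to H^{2n}$, i.e.\ the quadratic form associated to $C$.

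The technical core, and the step I expect to be the main obstacle, is \emph{justifying that $\Delta$ is computed by Connes' $B$ in this particular model} — i.e.\ transporting the geometric BV-operator on $H_*(\mathcal L M;\Q)$ through the chain of quasi-isomorphisms used to prove Theorem~\ref{TMain1} (the Eilenberg--Moore / cobar identification of $H_*(\mathcal L M)$ with the homology of the above three-term complex) and checking it goes to Connes' operator, with correct signs. Two things make this delicate: first, the three-term complex is not literally the cyclic bar complex of $U$ but a small quasi-isomorphic model (the relation $\chi$ and the maps $d,d'$ encode the single quadratic relation), so one must verify $B$ descends to it — this is where the Jacobi-identity computation already invoked in the excerpt (showing $\operatorname{im} d'\subseteq\ker d$) gets reused to show $B$ respects $\operatorname{im} d$, $\ker d$, $\operatorname{im} d'$; second, the abelianization maps $\eta_q,\eta_w,\eta_z$ are not chain maps to a loop-space model of anything geometric, so the formulas are only asserted \emph{after} abelianization precisely because the pre-abelianized $\Delta$ involves graded-commutator corrections that are not canonically representable — I would state this honestly and only claim the composites $\eta_w\circ\Delta$ and $\eta_z\circ\Delta$. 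Once the model identification is pinned down, the remaining computations are the routine combinatorics of Connes' $B$ on monomials, and checking that the displayed formulas are well-defined (independent of the monomial representative modulo $\operatorname{im} d$, resp.\ modulo $\operatorname{im} d'$) follows from the derivation property of $B$ together with $\eta(\chi)=0$.
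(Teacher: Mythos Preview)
Your approach via Hochschild homology and Connes' $B$-operator is genuinely different from the paper's, and while it is in principle viable, you have misidentified the nature of the abelianization maps and thereby missed the simplification that makes the paper's argument short.

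The paper's proof is entirely geometric and bypasses Hochschild models altogether. The key observation is that the abelianization maps $\eta_q,\eta_w,\eta_z$ \emph{are} induced by a map of spaces: one takes $f\colon M\to P=\prod_{i=1}^m K(\Q,n)$ given by the product of classifying maps for the generators $a_i\in H^n(M;\Q)$. Then $(\Omega f)_*$ is exactly the abelianization $U\to S(V)$, and comparing the Serre spectral sequences of the free loop fibrations over $M$ and $P$ (the latter collapses since $P$ is a topological group), one sees that $(\mathcal L f)_*$ restricts on the summands $\mathcal Q,\mathcal W,\mathcal Z$ to $\eta_q,\eta_w,\eta_z$. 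Naturality of $\Delta$ then gives $\eta_w\circ\Delta=\Delta\circ\eta_q$ and $\eta_z\circ\Delta=\Delta\circ\eta_w$, so it suffices to compute $\Delta$ on $H_*(\mathcal L P;\Q)$. This is done in the preceding section using that $P$ is a commutative topological group: one gets an explicit formula for $\Delta$ on each factor $\mathcal L K(\Q,n)$ and then combines factors via a cross-product derivation rule. The containments $\Delta(\mathcal Q)\subseteq\mathcal W$, $\Delta(\mathcal W)\subseteq\mathcal Z$, $\Delta(\mathcal Z)=0$ are obtained not from any Hochschild-length grading but simply from degree placement: for $n>3$ the three summands occupy pairwise disjoint sets of degrees, and $\Delta$ raises degree by one.

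So your statement that ``the abelianization maps $\eta_q,\eta_w,\eta_z$ are not chain maps to a loop-space model of anything geometric'' is precisely the point where you diverge from the paper, and it is false: they are $(\mathcal L f)_*$ for $f$ the map into a product of Eilenberg--MacLane spaces. Recognizing this dissolves exactly the obstacle you flag as the ``technical core'' --- there is no need to transport $\Delta$ through a chain of algebraic quasi-isomorphisms, because naturality of $\Delta$ under continuous maps does the work. Your Hochschild/Connes-$B$ route could likely be made to work (the three-term complex is a Koszul-type small model for the relevant Hochschild complex, and the identification of $\Delta$ with $B$ is a theorem of Jones/Goodwillie), but it would require substantially more machinery to justify the model comparison and the compatibility of $B$ with the passage to the small complex, whereas the paper's argument needs only Serre spectral sequences and elementary facts about loop spaces of Eilenberg--MacLane spaces.
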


Berglund and B\"orjeson~\cite{BerglundBorjeson} have subsequently computed the free loop space homology of highly connected manifolds 
(including the ones considered in this paper) using different techniques.
They also give a description of the action of the BV-operator and the Chas-Sullivan loop product.
With a bit of effort it is likely that the spectral sequence methods in this paper can be extended to cover many of the highly connected manifolds in~\cite{BerglundBorjeson}.
For example, the based loop space homology of highly connected manifolds is largely known~\cite{MR3391363}, and this is one of the main ingredients used in our calculations. 
On the other hand, we do not know whether a complete description of the Chas-Sullivan loop product and BV-operator is possible using our approach -
one difficulty being extension issues in the Cohen-Jones-Yan spectral sequence~\cite{MR2039760} when computing the loop product,
together with a seeming incompatibility between the BV-operator and the Serre spectral sequence of a free loop fibration.

We should mention that there are sources of application for the above calculations that go beyond the classical question:
\emph{are there infinitely many geometrically distinct periodic geodesics on a Riemannian manifold $M$?}
For example, detailed information about the Betti numbers of $\mc L M$ reflects more detailed information about the number of geodesics of variable length.
See~\cite{BasuBasu,BerglundBorjeson,MR672499,MR551562}.

\section{A Useful Lemma}

Take a fibration sequence $\seqmm{F}{i}{X}{f}{B}$ with $B$ simply-connected. Recall the induced homotopy fibration sequence
\begin{equation}
\label{EPrincipalFib}
\seqmm{\Omega B}{\vartheta}{F}{i}{X}
\end{equation}
is a \emph{principal} homotopy fibration. 
Namely, there is a homotopy associative $H$-space structure on the homotopy fiber $\Omega B$ together with a left action
$$
\theta\wcolon\seqm{\Omega B\times F}{}{F}
$$ 
that fits into a homotopy commutative square
\[\diagram
\Omega B\times \Omega B\rto^{\ID\times \vartheta}\dto_{mult.} & \Omega B\times F\dto^{\theta}\\
\Omega B\rto^{\vartheta} & F.
\enddiagram\]
In our case the $H$-space multiplication $mult.$ on $\Omega B$ is taken as the one defined by composing loops, 
and the action $\theta$ is defined by applying the homotopy lifting property to loops in $B$. 

By a result of Moore~\cite{Moore}, the homology Serre spectral sequence $\xi$ of a principal fibration such as~(\ref{EPrincipalFib}) 
has a left $H_*(\Omega B)$-module induced by the associated action $\theta$.
Namely, there is a left action $\seqm{H_*(\Omega B)\otimes \xi^r_{i,j}}{}{\xi^r_{i,j+*}}$ reducing to the Pontrjagin multiplication on 
$\xi^2_{0,*}\cong H_*(\Omega B)$ and differentials respect this action.
Most of the effort in computing differentials is therefore reduced to determining those emanating from the degree $0$ horizontal line. 

Since fibrations are characterized by the homotopy lifting property, 
one might also expect $\theta$ to have a direct bearing on the homology Serre spectral sequence for our original fibration $f$. 
This was exploited by McCleary in~\cite{McCleary}, where he used a result of Brown~\cite{Brown} and Shih~\cite{Shih} 
to give a computation of the free loop space homology of certain low rank Stiefel manifolds. 
The following proposition strengthens the result in~\cite{Brown,Shih} by doing away with an assumption about certain elements being trangressive. 
The proof is moreover fairly simple. Let 
$$
\mc E=\{\mc E^r,\delta^r\}
$$ 
denote the homology Serre spectral sequence for $f$, and
$$
E=\{E^r,d^r\}
$$ 
the homology Serre spectral sequence for the path-loop fibration 
sequence $\seqmm{\Omega B}{\subset}{\mc P B}{ev_1}{B}$.

\begin{proposition}
\label{PAction}
Suppose $H_*(B)$ and $H_*(F)$ are torsion free.
Given $z\in H_*(B)$, and $\sum_i x_i\otimes v_i\in E^2_{*,*}\cong H_*(B)\otimes H_*(\Omega B)$,
suppose $d^s(z\otimes 1)=d^s(\sum_i x_i\otimes v_i)=0$ in $E^s_{*,*}$ for $2\leq s<r$, 
and
$$
d^r(z\otimes 1)=\sum_i x_i\otimes v_i.
$$ 
Then given $z\otimes y\in\mc E^2_{*,*}\cong H_*(B)\otimes H_*(F)$ for any $y\in H_*(F)$, 
for each $2\leq s<r$ we have 
$$
\delta^s(z\otimes y)=\delta^s(\sum_i x_i\otimes \theta_*(v_i\otimes y))=0
$$
and
$$
\delta^r(z\otimes y)= \sum_i x_i\otimes \theta_*(v_i\otimes y).
$$ 
\end{proposition}

\begin{proof}
First recall the following well-known property (which is essentially the homotopy lifting property in disguise).
Let $P^{ev_0,f}\subseteq map([0,1],B)\times X$ be the pullback of \seqm{X}{f}{B} and the evaluation map \seqm{map([0,1],B)}{ev_0}{B}, 
where $ev_t(\omega)=\omega(t)$. 
Now consider the map $\bar f\colon\seqm{map([0,1],X)}{}{P^{ev_0,f}}$ defined by $\bar f(\omega)=(f\circ\omega,\omega(0))$.
Then a surjection $f$ is a fibration if and only if there exists a map $g\colon\seqm{P^{ev_0,f}}{}{map([0,1],X)}$
such that $\bar f\circ g=\ID\colon\seqm{P^{ev_0,f}}{}{P^{ev_0,f}}$.

Take the inclusion 
$
\phi\wcolon\seqm{\mc P B \times F}{}{P^{ev_0,f}} 
$
given by $\phi(\omega,a)=(\omega,a)$, and take the the composite
$$
\bar\theta\wcolon\seqmmm{(\mc P B\times F)}{\phi}{P^{ev_0,f}}{g}{map([0,1],X)}{ev_1}{X}.
$$
Let the fibration sequence
\begin{equation}
\label{Eprodfib}
\seqmm{\Omega B\times F}{\subset\times\ID}{\mc P B\times F}{ev_1\times\ast}{B\times\ast}
\end{equation}
be the product of the path-loop fibration sequence \seqmm{\Omega B}{\subset}{\mc P B}{ev_1}{B} and the trivial 
fibration sequence \seqmm{F}{\ID}{F}{\ast}{\ast}.
Let $E=\{E^s,d^s\}$ and $\mathring{E}=\{\mathring{E}^s,\mathring{d}^s\}$ be the homology Serre spectral sequences 
for the path-loop and trivial fibration respectively,
and $\hat E=\{\hat E^s,\hat d^s\}$ be the homology spectral sequence for their product~(\ref{Eprodfib}).
Define a differential $d^s_{\otimes}\colon\seqm{E^s\otimes \mathring{E}^s}{}{E^s\otimes \mathring{E}^s}$ by 
$\hat d^s(a\otimes b)=(d^s(a)\otimes b)+(-1)^{|a|}(a\otimes\mathring{d}^s(b))$. 
Since $H_*(F)$ is torsion-free, $\hat E^s=E^s\otimes \mathring{E}^s$ and $\hat d^s=d^s_{\otimes}$ (see~\cite{MR0145530,MR0086301}). 
In our case $\mathring{d}=0$, so we have
$$
\hat d^s(a\otimes b)=d^s(a)\otimes b
$$ 
for any $a\in E^s$ and $b\in \mathring{E}^s$.
One can easily check that the following diagram of fibration sequences commutes
\begin{equation}
\label{ELast}
\diagram
\Omega B\times F\rto^{\subset\times\ID}\dto^{\theta} & \mc P B\times F\rto^-{ev_1\times\ast}\dto^{\bar\theta} & B\times\ast\ddouble\\
F\rto^{i} & X\rto^-{f} & B,
\enddiagram
\end{equation}
with our action $\theta$ being in fact the restriction of $\bar\theta$ to the subspace $\Omega B\times F$.
Let
$$ 
\zeta\wcolon\seqm{\hat E=E\otimes\mathring{E}}{}{\mc E} 
$$
be the morphism of spectral sequences induced by this diagram. 

Since $d^s(z\otimes 1)=0\in E^s_{*,*}$ for $2\leq s<r$ and $d^r(z\otimes 1)=\sum_i x_i\otimes v_i$, 
then for any $b\in \mathring{E}^s$
$$
\hat d^s((z\otimes 1)\otimes b)=d^s(z\otimes 1)\otimes b=0
$$
$$
\hat d^r((z\otimes 1)\otimes b)=d^r(z\otimes 1)\otimes b=\sum_i (x_i\otimes v_i)\otimes b,
$$
which we use to obtain
\begin{align*}
\delta^r(z\otimes y)&=\delta^r(\,\zeta^r((z\otimes 1)\otimes (1\otimes y))\,)\\
&=\zeta^r(\,\hat d^r((z\otimes 1)\otimes (1\otimes y))\,)\\
&=\zeta^r\bracket{\sum_i (x_i\otimes v_i)\otimes (1\otimes y)}\\
&=\sum_i x_i\otimes \theta_*(v_i\otimes y),
\end{align*}
and similarly, $\delta^s(z\otimes y)=0$ for $2\leq s<r$.

In a similarly manner, 
we see $\hat d^s((\sum_i x_i\otimes v_i)\otimes b)=0$ for $2\leq s<r$ and (in turn) $\delta^s(\sum_i x_i\otimes\theta_*(v_i\otimes y))=0$
using the fact that $d^s(\sum_i x_i\otimes v_i)=0$ (so the above equations make sense).

\end{proof}

We now turn our attention towards the free loop space fibration sequence
\begin{equation}
\label{Eloopfib}
\seqmm{\Omega B}{\vartheta}{\mc L B}{ev_1}{B}.
\end{equation}
The map $\vartheta$ is the canonical inclusion $\Omega B\subseteq \mc L B$, and $ev_1$ is the evaluation map
$ev_1(\omega)=\omega(1)$. The homology Serre spectral sequence for this fibration sequence will be denoted by
$$
\mc E=\{\mc E^r,\delta^r\},
$$
and as before $E=\{E^r,d^r\}$ is the homology Serre spectral sequence for the path-loop fibration of $B$.
The path-loop fibration is principal, 
so $E$ has a left $H_*(\Omega B)$-module as described before which the differentials $d$ respect.

Some basic properties of the free loop space fibration are as follows. The map \seqm{\mc L B}{ev_1}{B} has a section 
\seqm{B}{s}{\mc L B} defined by mapping a point $b\in B$ to the constant loop at $b$, which implies the connecting 
map $\varrho$ for the induced principal homotopy fibration \seqmm{\Omega B}{\varrho}{\Omega B}{\vartheta}{\mc L B} 
is null homotopic. The associated left action 
$$
\theta\wcolon\seqm{\Omega B\times\Omega B}{}{\Omega B}
$$
is given by 
$$
\theta(\omega,\lambda)=\omega\cdot\lambda\cdot\omega^{-1}
$$
for any $\omega,\lambda\in \Omega B$. 
If $v\in H_*(\Omega B)$ is primitive, then for any $y\in H_*(\Omega B)$ one has the formula
$$
\theta_*(v\otimes y)=(-1)^{|v||y|}yv-vy=-[v,y],
$$
where the multiplication on $H_*(\Omega B)$ is the Pontrjagin multiplication induced by loop composition on $\Omega B$. 
The proof of these can be found in~\cite{McCleary} for example.
Combining these properties with Propositions~\ref{PAction} gives the following description of the differentials 
in the spectral sequence $\mc E$.

\begin{proposition}
\label{PDiff}
Suppose $H_*(B)$ and $H_*(\Omega B)$ are torsion free, and $B$ is $1$-connected.
Given $z\in H_*(B)$, and $\sum_i x_i\otimes v_i\in E^2_{*,*}$ with $v_i$ primitive in $H_*(\Omega B)$,
suppose that $d^s(z\otimes 1)=0$ and $d^s(\sum_i x_i\otimes v_i)=0$ in $E^s_{*,*}$ for $2\leq s<r$, 
and
$$
d^r(z\otimes 1)=\sum_i x_i\otimes v_i.
$$ 
Then given $z\otimes y\in\mc E^2_{*,*}$ for any $y\in H_*(\Omega B)$, 
for each $2\leq s<r$ we have 
$$
\delta^s(z\otimes y)=\delta^s(\sum_i x_i\otimes [v_i,y])=0
$$
and
$$
\delta^r(z\otimes y)= -\sum_i x_i\otimes [v_i,y].
$$~$\qqed$
\end{proposition}

There are instances where this formula fails to give us enough information to determine some of the higher differentials. 
For example, if we found ourselves in the situation where $\delta^s(z\otimes y)=0$ for $s\leq r$ and $d^r(z\otimes y)\neq 0$, 
then $z\otimes y\in\mc E^{r}_{*,*}$ survives to the $\mc E^{r+1}$ page, while $z\otimes y$ is not an element in $E^{r+1}_{*,*}$. 
In such case $\delta^s(z\otimes y)$ remains mysterious when $s>r$. 
An example where this situation happens in practice is the case of $4$-manifolds omitted from Theorem~\ref{TMain1}.

\section{Based Loop Space Homology}

Returning to our $2n$-manifold $M$ in the introduction, we consider the Hopf algebra $H_*(\Omega M)$. 
This is the last piece in the puzzle required to prove Theorem~\ref{TMain1}.
By Poincar\'e duality the only nonzero reduced homology groups of $M$ are in degrees $n$ and $2n$. 
This implies $M$ has a cell decomposition given by attaching an $n$-cell to an $m$-fold wedge of $n$-spheres
$\bigvee_m S^{n}\simeq M-*$, where $m=\Rank H^{n}(M)$.

Generally, if a space $Y$ is formed by attaching a $k$-cell to a space $X$ via an attaching map
\seqm{S^{k-1}}{\alpha}{X}, and $\alpha'$ is its adjoint, the composite with the looped inclusion 
\seqmm{S^{k-2}}{\alpha'}{\Omega X}{\Omega i}{\Omega Y} is nullhomotopic, so one obtains a factorization 
of Hopf algebras through Hopf algebra maps
\begin{equation}
\label{DFactor}
\diagram
& H_*(\Omega X;R)/I\dto^{\theta}\\
H_*(\Omega X;R)\rto^{(\Omega i)_*}\urto^{} & H_*(\Omega Y;R), 
\enddiagram
\end{equation}
where $I$ is the two-sided ideal generated by $\alpha'([S^{k-2}])\in H_{k-2}(\Omega X;R)$. The problem 
of determining the conditions under which $\theta$ is a Hopf algebra isomorphism is part of what is known 
as the \emph{cell-attachment problem}. One of these conditions - the \emph{inert} condition - states somewhat suprisingly 
that $\theta$ is a Hopf algebra isomorphism when $R$ is a field if and only if $(\Omega i)_*$ is a surjection
~(\cite{Lemaire,HalperinLemaire,FelixThomas}). Here we select $k=2n$, $Y\simeq M$, and $X\simeq M-*$, and use 
the inert condition to prove the following:

\begin{proposition}
\label{P1}
Suppose $n\geq 2$ and $\Rank H^{n}(M)\geq 1$. 
If $n\neq 2,4,8$, or $\tilde H^*(M)$ has trivial cup product squares, 

\begin{romanlist}
\item There is an isomorphism of Hopf algebras (free as $R$-modules) 
$$
H_*(\Omega M)\cong \frac{T(V)}{I}
$$
where $V=R\{u_1,\ldots,u_m\}$, $|u_i|=n-1$.
\item The element $\alpha'_*([S^{2n-2}])$ generating the two-sided ideal $I$
is given by
$$
\alpha'_*([S^{2n-2}])=\csum{i<j}{}{c_{ij}[u_j,u_i]}+\csum{i}{}{c_{ii}u_i^2}.
$$
\end{romanlist}
\end{proposition}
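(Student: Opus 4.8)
The plan is to realize $M-*\simeq \bigvee_m S^n$ and compute $H_*(\Omega(\bigvee_m S^n))$ first, then run the cell-attachment machinery of diagram~(\ref{DFactor}) with $X\simeq M-*$, $Y\simeq M$, and $k=2n$. By the James splitting / Bott--Samelson theorem, $H_*(\Omega(\bigvee_m S^n);R)$ is the free associative algebra $T(V)$ on generators $u_1,\dots,u_m$ in degree $n-1$ dual to the sphere classes; this holds over $\Z$ and any field since the relevant homology is torsion-free and concentrated in a single degree. It will be convenient to record that this Hopf algebra is primitively generated with the $u_i$ primitive. For part (ii), I would identify the attaching map $\alpha\colon S^{2n-1}\to \bigvee_m S^n$ of the top cell of $M$. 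Its homotopy class in $\pi_{2n-1}(\bigvee_m S^n)$ is detected, modulo the image of the Whitehead-square / Hopf-invariant contributions, by the Whitehead products $[\iota_i,\iota_j]$ with coefficients governed by the intersection form: $\alpha = \sum_{i<j} c_{ij}[\iota_i,\iota_j] + \sum_i c_{ii}\, w_i$, where $w_i$ is a Whitehead square $[\iota_i,\iota_i]$ (when $n$ is even) or a Hopf map contribution. Passing to the adjoint and into $H_{2n-2}(\Omega(\bigvee_m S^n))=T(V)_{2n-2}$, a Whitehead product $[\iota_i,\iota_j]$ adjoints to the commutator $[u_j,u_i]$ (up to sign from the adjunction), and $[\iota_i,\iota_i]$ adjoints to $u_i^2$ (again the James/Samelson-product dictionary), giving exactly $\alpha'_*([S^{2n-2}])=\sum_{i<j}c_{ij}[u_j,u_i]+\sum_i c_{ii}u_i^2=\chi$. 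Hence $I$ is precisely the two-sided ideal generated by $\chi$, and the target quotient $T(V)/I = U$.

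For part (i) it remains to show the Hopf-algebra map $\theta\colon U=T(V)/I\to H_*(\Omega M)$ of diagram~(\ref{DFactor}) is an isomorphism. By the inert-cell criterion (\cite{Lemaire,HalperinLemaire,FelixThomas}), over a field this is equivalent to $(\Omega i)_*\colon H_*(\Omega(\bigvee_m S^n))\to H_*(\Omega M)$ being surjective. I would establish surjectivity by a Euler-characteristic / Poincar\'e-series argument: compute the Poincar\'e series of $U$ explicitly (the relation $\chi$ is a single element in degree $2n-2$, and since $T(V)$ is free one gets $P_U(t)$ from the ``expected'' exact sequence, i.e. $U$ behaves as a complete intersection one-relator algebra), and compare with a lower bound on the Poincar\'e series of $H_*(\Omega M)$ coming from the loop-space homology spectral sequence or from the known rational homotopy type. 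If the series agree, then the surjection of graded modules of the same finite type in each degree forces $\theta$ to be an isomorphism; the inert condition then upgrades this to all fields, and a Bockstein / universal-coefficient argument transfers it to $\Z$ since everything in sight is torsion-free. The hypothesis $n\neq 2,4,8$ enters exactly here: when $n\in\{2,4,8\}$ the intersection form can have a diagonal entry $c_{ii}=\pm1$, so a cup-square can equal $\pm[M]$; equivalently the Hopf invariant one phenomenon changes the attaching map and the relation $\chi$, and the complete-intersection bookkeeping (hence the Poincar\'e series match) fails. Away from these dimensions the $u_i^2$ terms cause no such degeneration.

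The main obstacle is the surjectivity of $(\Omega i)_*$, equivalently the verification that $\chi$ is an inert element: one must rule out that attaching the top cell kills more of $H_*(\Omega(\bigvee_m S^n))$ than the ideal $(\chi)$ accounts for. I expect to handle this via the Poincar\'e-series comparison sketched above, using that $M$ is formal (or directly that $H^*(M;\Q)$ is a Poincar\'e-duality algebra generated in degree $n$ with a single top class) so that the rational loop-space homology has the predicted size, and then that torsion-freeness propagates the rational computation integrally and to all fields. A secondary, more bookkeeping-heavy point is pinning down the signs in the adjoint correspondence between Whitehead products and Pontryagin commutators so that the relation comes out as $\chi$ with the stated signs rather than, say, $\sum c_{ij}[u_i,u_j]$; this is routine but must be done carefully, keeping track of the $(-1)^{|x||y|}$ in the graded bracket and of the degree shift in the adjunction $\pi_k(\Omega X)\cong\pi_{k+1}(X)$.
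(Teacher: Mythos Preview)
Your approach to part~(ii) via the Hilton decomposition of the attaching map and the Samelson--product/commutator dictionary is a legitimate alternative to the paper's method, and with careful bookkeeping it goes through. One point you gloss over: for $n$ even you need Adams' Hopf-invariant-one theorem to know that $c_{ii}$ is even when $n\neq 2,4,8$, so that the component $\beta_i\in\pi_{2n-1}(S^n)$ can be written as $(c_{ii}/2)[\iota_i,\iota_i]$ plus torsion; then the adjoint Hurewicz image is $(c_{ii}/2)\cdot 2u_i^2=c_{ii}u_i^2$ and the torsion dies. The paper takes a different route: it computes $d^{2n}([M]\otimes 1)=\sum_{i,j}c_{ij}(a_j\otimes u_i)$ in the path--loop Serre spectral sequence by dualizing the cup-product differential, then compares with the spectral sequence for $M-*$ via the inclusion $i$ to read off the coefficients of $\alpha'_*([S^{2n-2}])$. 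Your route is more classical and direct; the paper's avoids invoking the Hilton splitting and the Whitehead--Samelson correspondence, and fits the spectral-sequence framework used in the rest of the argument.

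Part~(i), however, has a real gap. Your plan is to match the Poincar\'e series of $U$ against a bound for $P_{H_*(\Omega M)}(t)$ and conclude that $\theta$ is an isomorphism. But equal Poincar\'e series together with the existence of a map $\theta\colon U\to H_*(\Omega M)$ do not yield an isomorphism: you need $\theta$ to be surjective or injective, and neither is automatic. You write ``the surjection of graded modules'' as though surjectivity of $\theta$ (equivalently of $(\Omega i)_*$) were already known, but that is precisely the inert condition you are trying to verify --- the argument is circular. The paper closes this gap by quoting a homotopy-theoretic input from~\cite{BT3}: for $n\neq 2,4,8$ the space $\Omega M$ is a homotopy retract of $\Omega(M-*)$, so $(\Omega i)_*$ is a split epimorphism, the inert criterion fires over every field, and torsion-freeness of the retract passes the statement to $\Z$. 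This is also where the hypothesis $n\neq 2,4,8$ genuinely enters --- it is a hypothesis of the retraction theorem, not, as you suggest, a failure of one-relator Poincar\'e-series bookkeeping. If you want to avoid citing such a result you would need an independent proof that $(\Omega i)_*$ is onto (for instance a direct Adams--Hilton computation of $H_*(\Omega M)$ as an algebra, followed by an identification of $\theta$ on generators), not merely a dimension count.
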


\begin{proof}[Proof of part (i)]

In~\cite{MR3228428}, $\Omega M$ is shown to be a homotopy retract of $\Omega(M-*)$ when $n\neq 2,4,8$,
or when $\tilde H^*(M)$ has trivial cup product squares.
Therefore $(\Omega i)_*$ is a split epimorphism, so we obtain
$H_*(\Omega M;F)\cong H_*(\Omega(M-*);F)/I$ for any field $F$. 
Moreover, since $M-*$ is homotopy equivalent to $\bigvee_m S^{n}$, the $\Z$-module 
$H_*(\Omega(M-*);\Z)\cong T(V)$ is torsion-free. Therefore $H_*(\Omega M;\Z)$ is torsion-free, 
and the Hopf algebra isomorphism holds for $R=\Z$ as well. 

\end{proof}

\begin{proof}[Proof of part (ii)]

We will write $u_j=(\Omega i)_*(u_j)\in H_{n-1}(\Omega M)$, and take $u_j$ to be the transgression of 
$a_j\in H_{n}(M)$.

Since the elements $u_1,\ldots,u_m$ in $H_{n-1}(\Omega(M-*))$ are primitive, and there are no monomials 
of length greater than $2$ in degree $2n-2$, the elements $u_i^2$ and $[u_j,u_i]$
form a basis for the primitives in $H_{2n-2}(\Omega(M-*))$. Now $\alpha'_*([S^{2n-2}])$ is primitive since 
$[S^{2n-2}]$ is primitive, so we can set
$$
(\alpha')_*([S^{2n-2}])=\csum{i<j}{}{c''_{ij}[u_i,u_j]}+\csum{i}{}{c''_{ii}u_i^2}
$$
for some integers $c''_{ij}$.

Consider the homology Serre spectral sequence $E=(E^r,d^r)$ for the (principal) path-loop fibration sequence $M$, 
with
$$
E^2_{*,*}=H_*(M)\otimes H_*(\Omega M).
$$
On the dual cohomology spectral sequence we have the formula
$$
d_{n}(a_j\otimes u_i)=d_{n}(a_j\otimes 1)(1\otimes u_i)+(-1)^n(a_j\otimes 1)d_{n}(1\otimes u_i)
=(-1)^n(a_j\otimes 1)(a_i\otimes 1)= c_{ij}([M]^*\otimes 1),
$$
so dualizing back to the homology spectral sequence gives us 
\begin{align}
\label{EDiff}
d^{n}([M]\otimes 1)=\csum{i,j}{}{c_{ij} (a_j\otimes u_i)}.
\end{align}

Take $\bar E=(\bar E^r,\bar d^r)$ to be the homology Serre spectral sequence for the path-loop fibration of $M-*$.
The inclusion \seqm{(M-*)}{}{M} induces an inclusion of the corresponding path-loop fibrations of $(M-*)$ and $M$, 
and in turn a morphism of spectral sequences
$
\gamma\wcolon\seqm{\bar E}{}{E}.
$
On the second page of spectral sequences $\gamma_2$ maps 
$1\otimes u_i$ to $1\otimes u_i$ and $a_i\otimes 1$ to $a_i\otimes 1$, and 
\seqm{\bar E^{r}_{n,n-1}}{\gamma_r}{E^{r}_{n,n-1}} is an isomorphism for $2\leq r\leq n$. 

By part (i) of the theorem (and preceeding discussion), $(\alpha')_*([S^{2n-2}])$ generates the kernel 
of $(\Omega i)_{*}\colon\seqm{H_{2n-2}(\Omega(M-*))}{}{H_{2n-2}(\Omega M)}$, 
so $1\otimes(\alpha)_*([S^{2n-2}])$ generates the kernel of $\gamma_2\colon\seqm{E^{2}_{0,2n-2}}{}{E^{2}_{0,2n-2}}$. 
Since $\gamma_r\colon\seqm{\bar E^{r}_{i,j}}{}{E^{r}_{i,j}}$ is an isomorphism for $i<n$, $j<2n-2$, and all $r$, 
then in fact $1\otimes(\alpha')_*([S^{2n-2}])$ generates the kernel of the map 
\seqm{\bar E^{r}_{0,2n-2}}{\gamma_r}{E^{r}_{0,2n-2}} for $2\leq r\leq n$.

Take the element
$$
\zeta''=\csum{i\leq j}{}{c''_{ij} (a_j\otimes u_i-a_i\otimes u_j)}
$$
in $\bar E^{r}_{n,n-1}$, for $2\leq r\leq n$. Then 
\begin{equation}
\label{EGammaZeta}
\gamma_{n}(\zeta'')=\csum{i\leq j}{}{c''_{ij} (a_j\otimes u_i-a_i\otimes u_j)},
\end{equation}
and in $\bar E^{n}_{0,2n-2}$ we have 
$$
1\otimes(\alpha')_*([S^{2n-2}])=\csum{i\leq j}{}{c''_{ij}(1\otimes[u_i,u_j])}=\bar d^{n}(\zeta'').
$$
Since $\bar E^{r}_{i,j}=\{0\}$ for $i>n$ and $\bar E^{\infty}_{\ast,\ast}=\{0\}$, 
the differential \seqm{\bar E^{n}_{n,n-1}}{\bar d^{n}}{\bar E^{n}_{0,2n-2}} is an isomorphism,
and since \seqm{\bar E^{n}_{n,n-1}}{\gamma_{n}}{E^{n}_{n,n-1}} is an isomorphism
and $1\otimes(\alpha')_*([S^{2n-2}])$ generates the kernel of \seqm{\bar E^{n}_{0,2n-2}}{\gamma_{n}}{E^{n}_{0,2n-2}},
by naturality we see that the kernel of the differential \seqm{E^{n}_{n,n-1}}{d^{n}}{E^{n}_{0,2n-2}} is generated by $\gamma_{n}(\zeta'')$. 
In particular, we may project $\gamma_{n}(\zeta'')$ down to $E^{\infty}_{*,*}$. 

Let
$$
\mc I=\im d^{n}\wcolon\seqm{E^{n}_{2n,0}}{}{E^{n}_{n,n-1}}
$$
$$
\mc K=\ker d^{n}\wcolon\seqm{E^{n}_{n,n-1}}{}{E^{n}_{0,2n-2}}.
$$
As we saw above, $\mc I$ is generated by $d^{n}([M]\otimes 1)$, and $\gamma_{n}(\zeta'')$ generates $\mc K$. 
But the short exact sequence
$$
\seqmmmm{0}{}{E^{n}_{2n,0}}{d^{n}}{E^{n}_{n,n-1}}{d^{n}}{E^{n}_{0,n-2}}{}{0}
$$
implies $\mc I\subseteq \mc K$. Therefore $d^{n}([M]\otimes 1)=\pm\gamma_{n}(\zeta'')$. 
Now comparing coefficients in equations~(\ref{EDiff}) and~(\ref{EGammaZeta}), the result follows.

\end{proof}

\section{Proof of Theorem~\ref{TMain1}}

We now have everything required to prove Theorem~\ref{TMain1} via a routine Serre spetral sequence argument.
Let $\mc E=\{\mc E^r,\delta^r\}$ be the homology Serre spectral sequence for the free loop space fibration 
sequence 
$$
\seqmm{\Omega M}{\vartheta}{\mc L M}{ev_1}{M}.
$$
By Proposition~\ref{P1} we have an isomorphism $H_*(\Omega M)\cong U=T(V)/I$ of Hopf algebras, which are 
free as $R$-modules. So we start with an isomorphism of free $R$-modules 
$$
\mc E^2_{*,*}\cong R\{1,a_1,\ldots,a_m,[M]\}\otimes U.
$$
By Proposition~\ref{PDiff}
$$
\delta^{n}(a_i\otimes y)= -1\otimes [u_i,y]
$$
where $u_i$ is the transgression of $a_i$, and using~(\ref{EDiff}),
$$
\delta^{n}([M]\otimes y)=-\csum{i,j}{}{c_{ij} (a_j\otimes [u_i,y])}.
$$
Therefore $\mc E^{2n}_{0,*}\cong \mc Q$, $\mc E^{\infty}_{n,*}\cong \mc E^{2n}_{n,*}\cong\mc W$, and 
$\mc E^{2n}_{2n,*}\cong \mc Z$, while all other entries in the spectral sequence are zero. 
Here, the only possible nonzero differentials are $\delta^{2n}\colon\seqm{\mc E^{2n}_{2n,*}}{}{\mc E^{2n}_{0,*+2n-1}}$.
But since the nonzero elements in $\mc Z$ and $\mc Q$ are concentrated in total degrees $2n+k(n-1)$ and $k(n-1)$ respectively, 
one can check the differentials $\delta^{2n}$ are zero for degree placement reasons whenever $n>2$. 
Thus these isomorphisms carry over to the infinity page, that is, 
$$
\mc E^{\infty}_{*,*}\cong\mc E^{\infty}_{0,*}\oplus\mc E^{\infty}_{n,*}\oplus\mc E^{\infty}_{2n,*}\cong\mc Q\oplus\mc W\oplus\mc Z. 
$$

Generally, one has torsion here when $R=\Z$ (or at least in $\mc Q$, and possibly $\mc W$), so we must consider 
a potential extension problem. Once again placement reasons allow us to skirt around the issue.

From the construction of the homology Serre spectral sequence there are increasing filtrations 
$\mc F_{i,j}=\mc F_i H_j(\mc L M)\subseteq H_j(\mc L M)$ such that $\mc F_{k,k}=H_k(\mc L M)$, $\mc F_{i,j}=0$ 
for $i<0$, and 
$$
\mc E^{\infty}_{i,j}\cong\frac{\mc F_{i,i+j}}{\mc F_{i-1,i+j}}.
$$
Since the nonzero elements in $\mc Q$, $\mc W$, and $\mc Z$ are in degrees $k(n-1)$, $n+k(n-1)$, and $2n+k(n-1)$, 
$\mc Q$, $\mc W$, and $\mc Z$ pairwise have no nonzero elements in the same degrees when $n>3$. Since 
$\mc F_{n-1,*}=\mc F_{0,*}=\mc Q$, we have $\mc F_{n-1,n+k(n-1)}=\{0\}$, and we see that
$\mc F_{n,*}\cong\mc F_{0,*}\oplus\mc E^{\infty}_{n,*}\cong \mc Q\oplus\mc W$. Then 
$\mc F_{2n-1,2n+k(n-1)}=\mc F_{n,2n+k(n-1)}=\{0\}$, so $\mc F_{2n,*}\cong\mc F_{n,*}\oplus\mc E^{\infty}_{2n,*}$, 
and we have
$$
\mc E^{\infty}_{2n,*}\cong\mc F_{2n,*}=H_*(\mc L M)
$$  
whenever $n>3$. 

When $n=3$, the common nonzero degrees shared between any pair of these three modules are of the form $2(k+3)$, 
and these are only between $\mc Q$ and $\mc Z$. But since $\mc Z$ is torsion-free and $\mc Q=\mc F_{0,*}$ is at
the bottom of the filtration, there are no extension issues here either.

\section{Eilenberg-Maclane Spaces and the BV-operator}

We will need some information about the action of the BV-operator on products of Eilenberg-Maclane spaces before getting into the proof 
Theorem~\ref{TMain2}. The approach we take here is similar to the one taken by Hepworth in~\cite{Hepworth1} to compute the 
BV-operator for Lie groups. We begin this section by recalling it. Fix $R$ to be a principal ideal domain, and $X$ 
(homotopy type of a $CW$-complex) a path-connected topological group with multiplication \seqm{X\times X}{\mu}{X}. 
This makes $\mc L X$ into topological group with multiplication \seqm{\mc L X\times\mc L X}{\mc L\mu}{\mc L X} defined 
by point-wise multiplication of loops $(\omega\cdot\gamma)(t)=\omega(t)\cdot\gamma(t)$. There is a well-known homeomorphism 
\begin{align*}
h\wcolon\seqm{X\times\Omega X}{}{\mc L X}\\
h(x,\omega)=x\cdot \omega
\end{align*}
with inverse $h^{-1}\colon\seqm{\mc L X}{}{X\times\Omega X}$ given by $h^{-1}(\omega)=(\omega(0),\omega(0)^{-1}\cdot\omega)$, 
where $x\cdot\omega$ is the loop defined at each point by $(x\cdot\omega)(t)=x\cdot\omega(t)$.
These homeomorphisms are equivariant with respect to our action \seqm{S^1\times\mc L X}{\nu}{\mc L X}, and the action
$$
\bar\nu\wcolon\seqm{S^1\times X\times\Omega X}{}{X\times\Omega X}
$$
defined by the formula 
$$
\bar\nu(t,x,\omega)=h^{-1}\circ\nu(t,x\cdot\omega)=\bracket{x\cdot\omega_t(0),(x\cdot\omega_t(0))^{-1}\cdot x\cdot\omega_t}
=\bracket{x\cdot\omega_t(0),\omega_t(0)^{-1}\cdot\omega_t}
$$
where $\omega_t(s)=\nu(t,\omega)(s)=\omega(s+t)$. On homology we have a commutative square
\[\diagram
H_*(X\times\Omega X;R)\dto_{\bar\Delta}\rto^{h_*}_{\cong} & H_{*}(\mc L X;R)\dto^{\Delta}\\
H_{*+1}(X\times\Omega X;R)\rto_{h_*}^{\cong} & H_{*+1}(\mc L X;R).
\enddiagram\]
where $\bar\Delta(e)=\bar\nu_*([S^1]\otimes e)$. Clearly, after transposing $X$ and $S^1$, $\bar\nu$ is the composite 
$$
\seqmmm{X\times(S^1\times\Omega X)}{\ID_X\times\vartriangle}{X\times (S^1\times\Omega X)\times(S^1\times\Omega X)}
{\ID_X\times ev\times\phi}{(X\times X)\times\Omega X}{\mu\times\ID}{X\times\Omega X},
$$
with $ev\colon\seqm{S^1\times\Omega X}{}{X}$ the evaluation map $ev(t,\omega)=\omega(t)=\omega_t(0)$, and
$\phi\colon\seqm{S^1\times\Omega X}{}{\Omega X}$ defined by $\phi(t,\omega)=\omega_t(0)^{-1}\cdot\omega_t$.
Thus, if $H_*(\Omega X;R)$ is a free $R$-module, so that (for simplicity) the 
cross product \seqm{H_*(X;R)\otimes H_*(\Omega X;R)}{\times}{H_*(X\times\Omega X;R)} is an isomorphism, and
the coproduct on an element $b\in H_*(\Omega X;R)$ has the form $\vartriangle_*(b)=\sum_i d_i\otimes e_i$, 
then $\bar\Delta$ satisfies
\begin{align}
(-1)^{|a|}\bar\Delta(a\otimes b)&=
\csum{i}{}{(-1)^{|d_i|}\bracket{a(ev_*(1\otimes d_i))\otimes\phi_*([S^1]\otimes e_i)}}+
\csum{i}{}{\bracket{a(ev_*([S^1]\otimes d_i))\otimes\phi_*(1\otimes e_i)}}
\label{EBV}\\
&=\csum{i}{}{(-1)^{|d_i|}\bracket{a\epsilon(d_i)\otimes\phi_*([S^1]\otimes e_i)}}+
\csum{i}{}{\bracket{a(ev_*([S^1]\otimes d_i))\otimes e_i}}.\notag
\end{align}
where $\epsilon\colon\seqm{H_*(\Omega X;R)}{}{R}$ is the augmentation. To complete this formula one needs to determine 
the maps $\phi_*$ and $ev_*$. This latter map defines the \emph{homology suspension} 
$\sigma\colon\seqm{H_*(\Omega X;R)}{}{H_{*+1}(X;R)}$, $\sigma(a)=ev_*([S^1]\otimes a)$, which satisfies the formula
\begin{equation}
\label{EHS}
\sigma(ab)=\sigma(a)\epsilon(b)+\epsilon(a)\sigma(b)
\end{equation}
for any product $ab\in H_*(\Omega X;R)$ induced by the loop composition multiplication on $\Omega X$. In particular, 
$\sigma$ is zero on decomposable elements. 
If $X$ is an $H$-space, one can derive this formula by observing that the following diagram commutes
\[\diagram
(S^1\times S^1)\times(\Omega X\times\Omega X)\rto^{\ID\times T\times\ID} & 
(S^1\times \Omega X)\times(S^1\times \Omega X)\rto^-{ev\times ev} & X\times X\dto^{\mu}\\
S^1\times (\Omega X\times\Omega X)\rto^-{\ID\times\Omega\mu}\uto^{\vartriangle\times\ID\times\ID} & S^1\times\Omega X\rto^{ev} & X,
\enddiagram\]
and that point-wise multiplication of based loops $\Omega\mu$ on $\Omega X$ is homotopy commutative and homotopic 
to the loop composition multiplication on $\Omega X$ (this is a mapping space analogue of Theorem $5.21$, Chapter III 
in~\cite{Whitehead}). Alternatively, it is a consequence of the Homology Suspension Theorem~(\cite{Whitehead}, Chapter VIII). 
The map $\kappa(a)=\phi_*([S^1]\otimes a)$ is a bit more mysterious. At the very least, when $\mu$ is commutative 
one obtains an analogous commutative diagram for $\phi$ together with a derivation formula $\kappa(ab)=\kappa(a)b+a\kappa(b)$, 
while for the case of compact Lie groups, $\kappa$ is trivial since $H_*(\Omega X)$ is concentrated even degrees. 
We consider the case where $X$ is an Eilenberg-Maclane space $K(R,n)$. These can be taken to be commutative topological 
groups, and we may write $K(G,n-1)=\Omega K(G,n)$ with commutative multiplication induced by the one on $K(R,n)$, which
by the way is homotopic to the loop composition multiplication.

\begin{proposition}
\label{PEMBV}
Let $J$ be the image of the cross product
\seqm{H_*(K(R,n-1);R)\otimes H_*(K(R,n);R)}{\times}{H_*(K(R,n-1)\times K(R,n);R)} 
(which is injective by the K\"unneth formula). 
Suppose the coproduct on $a\in H_*(K(R,n-1);R)$ is in the image of the cross product,
that is, it is of the form $\vartriangle_*(b)=\sum_i d_i\times e_i$. Then with respect 
to the isomorphism $h_*$, the BV-operator is given on $a\times b\in J\subseteq H_*(\mc L K(R,n);R)$ 
by the formula
$$
\Delta(a\times b)
=(-1)^{|a|}\csum{i}{}{\bracket{a(\rho_*([S^1]\otimes d_i))\times e_i}},
$$
where \seqm{\Sigma K(R,n-1)}{\rho}{K(R,n)} is a classifying map for  
$[S^1]\otimes\iota_{n-1}\in H^*(\Sigma K(R,n-1);R)\cong \bar H^*(S^1;R)\otimes \bar H^*(K(R,n-1);R)$,
and $\iota_{n-1}\in \bar H^{n-1}(K(R,n-1);R)$ is the fundamental class.

\end{proposition}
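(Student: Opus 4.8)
The plan is to feed $X=K(R,n)$, $\Omega X\simeq K(R,n-1)$ into formula~(\ref{EBV}), applied to the element written there as $a\otimes b$ (so that $\vartriangle_*(b)=\sum_i d_i\otimes e_i$ by the coproduct hypothesis, which is precisely what makes~(\ref{EBV}) available for this $b$), and then to show that two collapses occur: every summand built from $\phi_*([S^1]\otimes-)$ vanishes, and every summand built from $ev_*([S^1]\otimes-)$ rewrites through $\rho_*$. Since $\Delta$ and $\bar\Delta$ correspond under $h_*$, combining these with~(\ref{EBV}) yields the stated formula after transporting along $h_*$.

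First I would kill the $\phi$-terms. In the first sum of~(\ref{EBV}) the coefficient $\epsilon(d_i)$ vanishes unless $d_i$ is a scalar multiple of $1$, and since $H_0(\Omega X;R)=R$ the only such contribution is the summand $1\otimes b$ of $\vartriangle_*(b)$; hence that sum equals $a\otimes\kappa(b)$, where $\kappa(b)=\phi_*([S^1]\otimes b)$ and \map{\phi}{S^1\times\Omega X}{\Omega X} is the map $\phi(t,\omega)(s)=\omega(t)^{-1}\omega(s+t)$. The crux is that $\phi$ is homotopic to the projection \map{\mathrm{pr}_2}{S^1\times\Omega X}{\Omega X}. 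To see this, note that $\phi(0,-)=\ID_{\Omega X}$ and that $\phi$ is the constant map on $S^1\times\{\ast\}$. Since $\Omega X\simeq K(R,n-1)$ is $(n-2)$-connected, homotopy classes of maps \seqm{S^1\times\Omega X}{}{\Omega X} are classified by $H^{n-1}(S^1\times\Omega X;R)$, and because $H_*(S^1;R)$ is free the Künneth theorem identifies this group with $\bigl(H^0(S^1;R)\otimes H^{n-1}(\Omega X;R)\bigr)\oplus\bigl(H^1(S^1;R)\otimes H^{n-2}(\Omega X;R)\bigr)$, with no $\Tor$ correction. Restricting $\phi^*(\iota_{n-1})$ to $\{0\}\times\Omega X$ shows its component in the first summand equals $\iota_{n-1}$, while restricting to $S^1\times\{\ast\}$ shows its component in the second summand is $0$ (this also disposes of $n=2$, the only case in which the second summand is nonzero). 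Thus $\phi^*(\iota_{n-1})=\mathrm{pr}_2^*(\iota_{n-1})$, so $\phi\simeq\mathrm{pr}_2$ and $\kappa(b)=(\mathrm{pr}_2)_*([S^1]\otimes b)=0$. (Over $\Q$ one can instead note that $\kappa$ is a degree $1$ derivation of $H_*(K(\Q,n-1);\Q)$, which is freely generated in degree $n-1$ and vanishes in degree $n$ for $n>2$.)

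Next I would rewrite the $ev$-terms. The evaluation \map{ev}{S^1\times\Omega X}{X}, $ev(t,\omega)=\omega(t)$, collapses $\{0\}\times\Omega X\cup S^1\times\{\ast\}$ to the basepoint, so it factors as \seqmm{S^1\times\Omega X}{q}{\Sigma\Omega X}{\overline{ev}}{X} with $q$ the quotient map, and $\overline{ev}$ is the counit of the loop--suspension adjunction (the adjoint of $\ID_{\Omega X}$). Consequently $\overline{ev}^*(\iota_n)=[S^1]\otimes\iota_{n-1}$ by the standard normalization of the fundamental class, so $\overline{ev}$ is a classifying map for $[S^1]\otimes\iota_{n-1}$ and $\overline{ev}\simeq\rho$; hence $ev_*([S^1]\otimes d_i)=\rho_*(q_*([S^1]\otimes d_i))=\rho_*([S^1]\otimes d_i)$ for each $i$. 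Feeding the conclusions of the two previous paragraphs into~(\ref{EBV}) makes the first sum vanish and leaves $(-1)^{|a|}\bar\Delta(a\otimes b)=\sum_i\bigl(a\cdot\rho_*([S^1]\otimes d_i)\bigr)\otimes e_i$; transporting along $h_*$ and multiplying by $(-1)^{|a|}$ gives the asserted formula.

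The main obstacle is the vanishing of the $\phi$-contribution: this is the single place where the Eilenberg--MacLane hypothesis is genuinely used, entering through the $(n-2)$-connectivity of $\Omega X$ and the resulting collapse of the cross-term Künneth summand. Everything else is routine --- that~(\ref{EBV}) legitimately applies to $b$ (the coproduct hypothesis), that the Künneth splittings used carry no $\Tor$ corrections ($H_*(S^1;R)$ being free), and the identification of $ev_*$ with $\rho_*\circ q_*$ (the usual packaging of the homology suspension as the counit $\rho$).
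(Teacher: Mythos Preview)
Your argument is correct and follows essentially the same route as the paper's proof: both identify $\phi$ with the projection onto $\Omega X$ by comparing pullbacks of $\iota_{n-1}$ (you are a bit more explicit about the K\"unneth splitting and the $n=2$ case), and both factor $ev$ through the quotient to $\Sigma\Omega X$ and recognize the resulting map as the adjoint of the identity, hence as $\rho$. Your additional remark isolating the $1\otimes b$ summand via the augmentation is a minor elaboration of what the paper leaves implicit in passing from~(\ref{EBV}) to the conclusion.
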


\begin{proof}

Since our map $\seqm{S^1\times K(R,n-1)}{\phi}{K(R,n-1)}$ restricts to the identity on the 
right factor, $\phi^*(\iota_{n-1})=1\otimes\iota_{n-1}$, or in other words, $\phi$ is a classifying map 
of the cohomology class $1\otimes\iota_{n-1}\in\bar H^{n-1}(S^1\times K(R,n-1);R)$. The projection 
map onto the right factor \seqm{S^1\times K(R,n-1)}{\ast\times\ID}{K(R,n-1)} is also a classifying 
map for $1\otimes\iota_{n-1}$. Since cohomology classes are in one-to-one correspondance with the
homotopy classes of the classifying maps representing them, $\phi$ must be homotopic to $\ast\times\ID$.
Therefore $\phi_*([S^1]\otimes d)=0$ for any $d$.

Next, recall the suspension isomorphism \seqm{H_{n-1}(K(R,n-1);R)}{\cong}{H_{n}(\Sigma K(R,n-1);R)}, 
sending $a\mapsto [S^1]\otimes a$, factors as the composite 
$$
\seqmm{H_{n-1}(K(R,n-1);R)}{\cong}{[K(R,n-1),K(R,n-1)]}{\cong}{[\Sigma K(R,n-1),K(R,n)]} 
$$
where the last map is the adjoint isomorphism. Since the evaluation map \seqm{S^1\times K(R,n-1)}{ev}{K(R,n)} 
restricts to the constant map on both the left and right factors, it factors as the composite
$$ 
ev\wcolon\seqmm{S^1\times K(R,n-1)}{quotient}{\Sigma K(R,n-1)}{ev'}{K(R,n)}, 
$$
where the last map $ev'$ (also known as the evaluation map in the literature) is the adjoint of 
the identity map \seqm{K(R,n-1)}{\ID}{K(R,n-1)}. Since the identity is a classifying map of 
$\iota_{n-1}$, by the above factorization of the suspension, its adjoint $ev'$ is a classifying map of 
$[S^1]\otimes\iota_{n-1}$. The proposition now follows using equation~(\ref{EBV}).

\end{proof}

The BV-operator has a very clean form on decomposable elements when we take our multiplication on $H_*(\mc L X)$ 
to be the one induced by point-wise multiplication of loops $\mc L\mu$ (instead of the multiplication 
$(\Omega\mu\times\mu)\circ(\ID\times T\times\ID)$ based on each coordinate of $\Omega X\times X\cong\mc L X$). 
Tamanoi~\cite{Tam06} gave a derivation formula with respect to this product 
$$
\Delta(ab)=\Delta(a)b+(-1)^{|a|}a\Delta(b),
$$
which is a straightforward consequence of the following commutative diagram
\[\diagram
(S^1\times S^1)\times(\mc L X\times \mc L X) \rto^-{\ID\times T\times\ID} & 
(S^1\times\mc L X)\times (S^1\times\mc L X)\rto^-{\nu\times\nu} & 
\mc L X\times\mc L X\dto^{\mc L\mu}\\
S^1\times (\mc L X\times \mc L X)\rto^{\ID\times\mc L\mu}\uto^{\vartriangle\times\ID\times\ID} & 
S^1\times \mc L X\rto^-{\nu} & \mc L X.
\enddiagram\]
Both multiplications on $\mc L X$ are equal when the multiplication on $X$ is commutative. Since this is the case
for $K(R,n)$, our formula in Proposition~\ref{PEMBV} satisfies
\begin{equation}
\label{EBVderivation}
(-1)^{|b||c|}\Delta(ac\times bd)=\Delta((a\times b)(c\times d))
=\Delta(a\times b)(c\times d)+(-1)^{|a|+|b|}(a\times b)\Delta(c\times d)
\end{equation}
The derivation formula can also be used to determine how the BV-operator interacts with the cross-product, 
as we see in the following:

\begin{proposition}
\label{PCPBV}
Let $X=X_1\times\cdots\times X_k$ be a product of topological groups $(X_i,\mu_i)$. 
Then the BV-operator for $\mc L X\cong \mc L X_1\times\cdots\times \mc L X_k$ satisfies 
$$
\Delta(a_1\times\cdots\times a_k)=\csum{i}{}{(-1)^{k_i}\bracket{a_1\times\cdots\times\Delta(a_i)\times\cdots\times a_k}}
$$
for $a_i\in H_*(\mc L X_i)$, where $k_i=\sum_{j=1}^{i-1}|a_{j}|$ and $k_1=0$.
\end{proposition}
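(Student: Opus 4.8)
The plan is to deduce the formula from the derivation property of $\Delta$ with respect to the pointwise product $\mc L\mu$, namely $\Delta(ab)=\Delta(a)b+(-1)^{|a|}a\Delta(b)$, recalled above from Tamanoi~\cite{Tam06} and valid for any topological group. First I would unwind the identification $\mc L X\cong\mc L X_1\times\cdots\times\mc L X_k$. Since $\mu$ is the coordinatewise product of the $\mu_i$, the pointwise multiplication $\mc L\mu$ corresponds, after transposing loop coordinates, to the coordinatewise product of the $\mc L\mu_i$; thus $\mc L X$ is the product of the topological groups $(\mc L X_i,\mc L\mu_i)$, and each $\mc L X_i$ sits in $\mc L X$ as a subgroup via $j_i\colon\seqm{\mc L X_i}{}{\mc L X}$, the map placing the constant loop on the remaining coordinates, which is $\mc L$ applied to the group homomorphism $X_i\hookrightarrow X$.

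The second step is the elementary observation that in a product of topological groups the homology cross product is the Pontryagin product of the factorwise inclusions: by naturality of the cross product, and because the iterated product of the inclusions $j_i$ recovers the homeomorphism $\mc L X_1\times\cdots\times\mc L X_k\cong\mc L X$, one has for $b_i\in H_*(\mc L X_i)$
$$(j_1)_*(b_1)\cdot(j_2)_*(b_2)\cdots(j_k)_*(b_k)=b_1\times b_2\times\cdots\times b_k,$$
with no intervening signs, since the factors occupy disjoint coordinate blocks and are never transposed. Applied with $b_i=a_i$, this presents $a_1\times\cdots\times a_k$ as an iterated $\mc L\mu$-product; applied with $\Delta(a_i)$ in the $i$-th slot, it identifies $(j_1)_*(a_1)\cdots(j_i)_*(\Delta(a_i))\cdots(j_k)_*(a_k)$ with $a_1\times\cdots\times\Delta(a_i)\times\cdots\times a_k$.

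The rest is a short computation. Iterating the derivation formula on the product $(j_1)_*(a_1)\cdots(j_k)_*(a_k)$ yields
$$\Delta\bigl((j_1)_*(a_1)\cdots(j_k)_*(a_k)\bigr)=\csum{i}{}{(-1)^{|a_1|+\cdots+|a_{i-1}|}\,(j_1)_*(a_1)\cdots\Delta\bigl((j_i)_*(a_i)\bigr)\cdots(j_k)_*(a_k)},$$
and since $|(j_j)_*(a_j)|=|a_j|$ the sign of the $i$-th term is exactly $(-1)^{k_i}$. As $\Delta$ is natural for arbitrary maps of spaces --- the rotation action commutes with $\mc L f$, so $(\mc L f)_*\circ\Delta=\Delta\circ(\mc L f)_*$ --- we have $\Delta((j_i)_*(a_i))=(j_i)_*(\Delta(a_i))$. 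Substituting this and converting each summand back into a cross product via the second step gives the asserted formula.

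The only delicate point is the sign bookkeeping in the second step: one must verify that rewriting $a_1\times\cdots\times a_k$ as $(j_1)_*(a_1)\cdots(j_k)_*(a_k)$ introduces no Koszul sign, which is the case because these classes lie in disjoint coordinate factors and are multiplied in their natural left-to-right order. It is essential throughout that one works with the product $\mc L\mu$ --- the one for which Tamanoi's clean derivation formula holds, and the one that restricts to $\mc L\mu_i$ on each factor $\mc L X_i$ --- rather than with the $\Omega X\times X$ multiplication.
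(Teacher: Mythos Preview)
Your proof is correct and follows essentially the same approach as the paper: both use Tamanoi's derivation formula for $\Delta$ with respect to the pointwise product $\mc L\mu$, express the cross product as a Pontryagin product of the factorwise inclusions, and invoke naturality of $\Delta$ under these inclusions. The only cosmetic difference is that the paper reduces to the case $k=2$ and iterates, whereas you handle the $k$-fold product directly.
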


\begin{proof}

It suffices to prove the statement for length-$2$ products $X=X_1\times X_2$. One can then iterate to obtain 
the general formula. Since the inclusion of the left factor \seqm{\mc L X_1}{\ID\times\ast}{\mc L X_1\times \mc L X_2} 
induces the map on homology sending $a\mapsto a\times 1$ for any $a$, by naturality of the BV-operator we have
$\Delta(a_1\times 1)=(\ID\times\ast)_*(\Delta(a_1))=\Delta(a_1)\times 1$. Similarly, 
$\Delta(1\times a_2)=1\times\Delta(a_2)$. Since $X$ is a topological group with multiplication $\mu$ defined by 
the composite \seqmm{X\times X}{\ID\times T\times\ID}{(X_1\times X_1)\times (X_2\times X_2)}{\mu_1\times\mu_2}{X},
the point-wise loop multiplication $\mc L\mu$ is the composite
$$
\seqmmm{\mc L X\times \mc L X}{\cong}{(\mc L X_1\times \mc L X_2)\times (\mc L X_1\times\mc L  X_2)}
{\ID\times T\times\ID}{(\mc L X_1\times \mc L X_1)\times (\mc L X_2\times\mc L  X_2)}
{\mc L \mu_1\times\mc L \mu_2}{\mc L X}.
$$
Therefore $(a_1\times 1)(1\times a_2)=a_1\times a_2$ with respect to this induced  product, and by the derivation
formula we have
\begin{align*}
\Delta(a_1\times a_2)&=\Delta(a_1\times 1)(1\times a_2)+(-1)^{|a_1|}(a_1\times 1)\Delta(a_2\times 1)\\
&=\Delta(a_1)\times a_2+(-1)^{|a_1|}a_1\times\Delta(a_2).
\end{align*}

\end{proof}

We have, for the sake of simplicity, been restricting $X$ to be a topological group. Some of the material above
however extends (up-to-homotopy) to where $X$ is a homotopy associative $H$-space. In this scenario $h$ is a 
homotopy equivalence since it defines is a weak equivalence between the free loop fibration and the trivial 
fibration. If $X$ has an inverse $-\ID\colon\seqm{X}{}{X}$, $x\mapsto x^{-1}$, the null homotopy 
$H\colon\seqm{X\times X\times I}{}{X}$, with $H_0=\ast$ and $H_1=\ID\times-\ID$, allows us to define the 
homotopy inverse $h^{-1}$ just as before, except this time composing the loop $\omega(0)^{-1}\cdot\omega$ with 
the based path given by $H_t(\omega(0)^{-1},\omega(0))$, and the action $\bar\nu$ will have a similar form.

In the case of rational coefficients, a simply connected $H$-space $X$ has a rational decomposition 
$X_{\Q}\simeq \prod_i K(\Q,n_i)$, 
and the classifying maps \seqm{\Sigma K(\Q,n_i-1)}{}{K(\Q,n_i)} can be identified with the Freudenthal 
suspension \seqm{S^{n_i}_{\Q}}{}{\Omega\Sigma S^{n_i}_{\Q}} in the $n_i$ even case, and evaluation 
\seqm{\Sigma\Omega S^{n_i}_{\Q}}{}{S^{n_i}_{\Q}} in the odd case. We see then that the action of 
$\Delta$ on $H_*(\mc L X;\Q)$ with respect to the algebra structure induced by the group multiplication on 
$\prod_i K(\Q,n_i)$ can be determined by applying Propositions~\ref{PCPBV} and~\ref{PEMBV}.

This technique can still be used to obtain some useful information for more general coefficients. Suppose 
$H_*(X;R)$ is free as an $R$-module, and $a\in H_n(X;R)$ is an indecomposable element in the Hopf algebra 
$H_*(X;R)$. Then the cohomology dual $\hat a\in H^n(X;R)$ of $a$ is a primitive element in the dual Hopf algebra 
$H^*(X;R)$, the classifying map \seqm{X}{c}{K(R,n)} of $\hat a$ is an $H$-map, and moreover it is natural with 
respect to the homeomorphism $h$. That is, the following squares commute up to homotopy
\begin{equation}
\diagram
X\times X\rto^-{c\times c}\dto_{\mu} & K(R,n)\times K(R,n)\dto^{mult.} & 
X\times\Omega X\rto^-{c\times\Omega c}\dto_{h}^{\cong} & K(R,n)\times\Omega K(R,n)\dto^{h}_{\cong}\\
X\rto^-{c} & K(R,n) &
\mc L X\rto^-{\mc L c} & \mc L K(R,n).
\enddiagram
\end{equation}
The proof of commutativity is as follows. For degree reasons, the fundamental class $\iota_n$ satisfies
$(mult.)^*(\iota_n)=(\iota_n\times 1+1\times\iota_n)$, 
so we have $(c\times c)^*\circ (mult.)^*(\iota_n)=\hat a\otimes 1+1\otimes \hat a$. Likewise, since $\hat a$ is primitive, 
$\mu^*\circ c^*(\iota_n)=\mu^*(\hat a)=\hat a\otimes 1+1\otimes \hat a$. Thus both the composites in the first square are 
classifying maps of $\hat a\otimes 1+1\otimes \hat a$, meaning they are homotopic. This gives the first square. 
To obtain the second square, let $H\colon\seqm{(X\times X)\times I}{}{K(R,n)}$ be a choice of homotopy between 
the composites in the first square. Define the homotopy $G\colon\seqm{(X\times\Omega X)\times I}{}{\mc L K(R,n)}$ by 
$G(x,\omega,t)=\omega_{x,t}$, where $\omega_{x,t}\colon\seqm{S^1}{}{X}$ is the loop given by $\omega_{x,t}(s)=H(x,\omega(s),t)$. 
Then $G$ defines a homotopy between the two composites in the second square. As a consequence of these diagrams, 
$\mc L c_*$ is an algebra map with respect to the algebra structure induced by the isomorphisms $h_*$, given by 
$(\mc L c)_*(v\otimes b)=c_*(v)\times (\Omega c)_*(b)$ .   

Now suppose $n$ is odd, $a$ is trangressive, and $\tau(a)\in H_{n-1}(\Omega X;R)$ is its trangression. Since $c_*$ maps 
$a$ to the homology dual $\hat\iota_n$ of $\iota_n$, and $\hat\iota_n$ is trangressive onto $\tau(\hat\iota_n)=\hat\iota_{n-1}$, 
the homology dual of the fundamental class of $\Omega K(R,n)=K(R,n-1)$, we have $(\Omega c)_*(\tau(a))=\hat\iota_{n-1}$. Then
$
(\mc L c)_*(\Delta(v\otimes\tau(a)))=\Delta((\mc L c)_*(v\otimes\tau(a)))=
\Delta(c_*(v)\times\hat\iota_{n-1})=(-1)^{|v|}(c_*(v)\hat\iota_{n})\times 1
$  
by Proposition~\ref{PEMBV}, and applying the derivation formula~(\ref{EBVderivation}) inductively,
$$
(\mc L c)_*(\Delta(v\otimes\tau(a)^k))=
\Delta(c_*(v)\otimes\hat\iota_{n-1}^k)=k(-1)^{|v|}((c_*(v)\hat\iota_n)\times\hat\iota_{n-1}^{k-1}).
$$ 
Since $(\mc L c)_*(va\otimes\tau(a)^{k-1})=(c_*(v)\hat\iota_n)\times\hat\iota_{n-1}^{k-1}$, if we assume $\tau(a)^{k-1}$ 
generates $H_{(k-1)(n-1)}(\Omega X;R)$, and $va$ generates $H_{n+|v|}(X;R)$, then
$$
\Delta(v\otimes\tau(a)^k)=k(-1)^{|v|}(va\otimes\tau(a)^{k-1}).
$$
For example, if we take $R=\zmodp$ for $p$ an odd prime, $X=S^{n}_{(p)}$ as a $p$-localized sphere 
(which is an $H$-space for $n$ odd~\cite{Adams2}), and $a=[S^{n}]$, then this formula completely determines the action of 
$\Delta$ on $H(\mc L S^{n};\zmodp)\cong H(\mc L X;\zmodp)$. This is a somewhat different approach for spheres than the one 
taken by Westerland in~\cite{Westerland}, and Menichi in~\cite{Menichi1}.

\section{Proof of Theorem~\ref{TMain2}}

For degree placement reasons, it is clear that $\Delta(\mc Q)\subseteq \mc W$, $\Delta(\mc W)\subseteq \mc Z$, and
$\Delta(\mc Z)=\{0\}$ when $n>3$. Consider the composite
$$
f\wcolon\seqmm{M}{\vartriangle}{\prod_{i=1}^m M}{\prod_i f_i}{\prod_{i=1}^m K(\Q,n)=P},
$$
where $f_i$ is the classifying map of the generator $a_i\in H^n(M;\Q)$. Let $\iota_i\in H_n(K(\Q,n);\Q)$ denote the 
homology dual of the fundamental class for the $i^{th}$ factor, and $\bar\iota_i\in H_{n-1}(K(\Q,n-1);\Q)$ the corresponding 
trangression. Let $W=\Q\{\iota_1,\ldots,\iota_m\}$ and $\bar W=\Q\{\bar \iota_1,\ldots,\bar \iota_m\}$.

Since $n$ is odd, $H_{*}(K(\Q,n);\Q)\cong\Lambda_{\Q}[\iota_i]$, $H_{*}(K(\Q,n-1);\Q)\cong\Q[\bar\iota_i]$, $f$ induces the 
injection \seqm{H_*(M;\Q)\cong V\oplus K}{}{\Lambda_{\Q}[W]}, mapping $a_i\mapsto\iota_i$ and 
$[M]\mapsto \beta=\sum_{i<j}(c_{ij}\iota_i\iota_j)$, 
and $\Omega f$ induces the algebra map \seqm{\mc Q}{\eta_q}{\Q[\bar W]\cong S(V)}, mapping $u_i\mapsto\bar\iota_i$. 
     
Consider the morphism of rational homology Serre spectral sequences \seqm{\mc E}{\phi}{E} induced by the map of free loop space 
fibrations 
\[\diagram
\Omega M\rto^{}\dto^{\Omega f} & \mc L M\rto^{ev_1}\dto^{\mc L f} & M\dto^{f}\\
\Omega P\rto^{} & \mc L P\rto^{ev_1} & P. 
\enddiagram\]
The spectral sequence $E$ for the bottom fibration collapses since the total space is a topological group with section. On the 
infinity page 
$$
H_*(\mc L P;\Q)\cong H_*(P;\Q)\otimes H_*(\Omega P;\Q)\cong \cplus{i=0}{m}{E^{\infty}_{ni,*}},
$$
and $\phi^{\infty}$ restricts to the maps \seqm{\mc Q}{\eta_q}{\Q[\bar W]\cong E^{\infty}_{0,*}}, 
\seqm{\mc W}{\eta_w}{W\otimes \Q[\bar W]\cong E^{\infty}_{n,*}}, and 
\seqm{\mc Z}{\eta_z}{\Q\{\beta\}\otimes \Q[\bar W]\subseteq E^{\infty}_{2n,*}} 
(note $W\cong V$, $\Q\{\beta\}\cong K$, and $\Q[\bar W]\cong S(V)$ in the introduction). 

Let $F$ be the filtration of $H_*(\mc L P;\Q)$ associated with the spectral sequence $E$. Notice $E^{\infty}_{n,*}\cong F_{n,n+*}/\Q[\bar W]$, 
and $\Q[\bar W]$ is concentrated in degrees $k(n-1)$, while $\mc W$ is concentrated in degrees $n+k(n-1)$, which are never
equal when $n>3$, so they do not share any nonzero elements in the same degree. Similarly, $E^{\infty}_{2n,*}\cong F_{2n,2n+*}/F_{n,2n+*}$, 
$F_{n,*}\cong \Q[\bar W]\oplus (W\otimes\Q[\bar W])$ is concentrated in degrees $k(n-1)$ and $n+k(n-1)$, and $\mc Z$ is 
concentrated in degrees $2n+k(n-1)$, which are never equal when $n>3$. Therefore, with respect to our isomorphism 
$H_*(\mc L M;\Q)\cong\mc Q\oplus\mc W\oplus\mc Z$, $(\mc L f)_*$ restricts to the maps $\eta_q$, $\eta_w$, and $\eta_z$ on each summand.

The action of $\Delta$ on $H_{*}(\mc L K(\Q,n-1);\Q)$ is given by $\Delta(1\otimes\bar\iota_i^k)=k(\iota_i\otimes\bar\iota_i^{k-1})$
and $\Delta(a\otimes\bar\iota_i)=0$ when $|a|>0$. This follows from Proposition~\ref{PEMBV}, and iterating formula~(\ref{EBVderivation}). 
Alternatively, it follows from~\cite{Westerland,Menichi1}. Now by Proposition~\ref{PCPBV},
$$
\Delta(a\otimes\bar\iota_1^{k_1}\cdots\bar\iota_m^{k_m})=
\csum{i=1}{m}{k_i(a\iota_{i}\otimes\bar\iota_i^{k_1}\cdots\bar\iota_i^{k_i-1}\cdots\bar\iota_m^{k_m})}\subseteq 
W\otimes\Q[\bar W]\cong A\otimes S(V)
$$
for any integers $k_i\geq 0$. Since for any $q\in\mc Q$, we have $\Delta(q)\in\mc W$,
$$
\Delta\circ\eta_q(q)=\Delta\circ(\mc L f)_*(q)=(\mc L f)_*\circ\Delta(q)=\eta_w\circ\Delta(q),
$$
we obtain the formula for the composite \seqmm{\mc Q}{\Delta}{\mc W}{\eta_w}{A\otimes S(V)}. Similarly we obtain the formula for the
composite \seqmm{\mc W}{\Delta}{\mc Z}{\eta_z}{K\otimes S(V)}.

\section{Acknowledgements}
The second author was supported by a Leibniz-Fellowship from Mathematisches-Forschungsinsitut-Oberwolfach and an 
Invitation to the Max-Planck-Institut f\"ur Mathematik in Bonn. Both authors are grateful to the MFO's hospitality 
to let them spend some time together to work on this project, Jie Wu for suggesting the problem to the first author, 
John McCleary and the anonymous referee for their helpful comments and suggestions.

\bibliographystyle{amsplain}
\bibliography{mybibliography}

\providecommand{\bysame}{\leavevmode\hbox to3em{\hrulefill}\thinspace}
\providecommand{\MR}{\relax\ifhmode\unskip\space\fi MR }
\providecommand{\MRhref}[2]{%
  \href{http://www.ams.org/mathscinet-getitem?mr=#1}{#2}
}
\providecommand{\href}[2]{#2}
\begin{thebibliography}{10}

\bibitem{Adams2}
J.~F. Adams, \emph{The sphere, considered as an {$H$}-space {${\rm mod}\,p$}},
  Quart. J. Math. Oxford. Ser. (2) \textbf{12} (1961), 52--60. \MR{0123323 (23
  \#A651)}

\bibitem{MR672499}
Werner Ballmann and Wolfgang Ziller, \emph{On the number of closed geodesics on
  a compact {R}iemannian manifold}, Duke Math. J. \textbf{49} (1982), no.~3,
  629--632. \MR{672499 (84d:53048)}

\bibitem{BasuBasu}
Samik Basu and Somnath Basu, \emph{Homotopy groups and periodic geodesics of
  closed 4-manifolds}, Internat. J. Math. \textbf{26} (2015), no.~8, 1550059,
  34. \MR{3372186}

\bibitem{MR3228428}
Piotr Beben and Stephen Theriault, \emph{The loop space homotopy type of
  simply-connected four-manifolds and their generalizations}, Adv. Math.
  \textbf{262} (2014), 213--238. \MR{3228428}

\bibitem{MR3391363}
Piotr Beben and Jie Wu, \emph{The homotopy type of a {P}oincar\'e duality
  complex after looping}, Proc. Edinb. Math. Soc. (2) \textbf{58} (2015),
  no.~3, 581--616. \MR{3391363}

\bibitem{BerglundBorjeson}
A.~Berglund and K.~B{\"o}rjeson, \emph{Free loop space homology of highly
  connected manifolds}, preprint, arxiv:1502.03356.

\bibitem{MR0145530}
William Browder, \emph{On differential {H}opf algebras}, Trans. Amer. Math.
  Soc. \textbf{107} (1963), 153--176. \MR{0145530}

\bibitem{Brown}
E.H. Brown, \emph{Twisted tensor products}, Ann. of Math. \textbf{1} (1959),
  223--246.

\bibitem{ChasSullivan}
M.~{Chas} and D.~{Sullivan}, \emph{String topology}, preprint,
  arXiv:math/9911159.

\bibitem{CJY}
Ralph~L. Cohen, John D.~S. Jones, and Jun Yan, \emph{The loop homology algebra
  of spheres and projective spaces}, Categorical decomposition techniques in
  algebraic topology ({I}sle of {S}kye, 2001), Progr. Math., vol. 215,
  Birkh\"auser, Basel, 2004, pp.~77--92. \MR{2039760 (2005c:55016)}

\bibitem{MR2039760}
\bysame, \emph{The loop homology algebra of spheres and projective spaces},
  Categorical decomposition techniques in algebraic topology ({I}sle of {S}kye,
  2001), Progr. Math., vol. 215, Birkh\"auser, Basel, 2004, pp.~77--92.
  \MR{2039760}

\bibitem{FelixThomas}
Y.~F{\'e}lix and J.-C. Thomas, \emph{Effet d'un attachement cellulaire dans
  l'homologie de l'espace des lacets}, Ann. Inst. Fourier (Grenoble)
  \textbf{39} (1989), no.~1, 207--224. \MR{1011984 (90j:55012)}

\bibitem{FTString}
\bysame, \emph{Rational {BV}-algebra in string topology}, Bull. Soc. Math.
  France \textbf{136} (2008), no.~2, 311--327. \MR{2415345 (2009c:55015)}

\bibitem{MR551562}
Mikhael Gromov, \emph{Homotopical effects of dilatation}, J. Differential Geom.
  \textbf{13} (1978), no.~3, 303--310. \MR{551562 (82d:58017)}

\bibitem{MR0086301}
V.~K. A.~M. Gugenheim and J.~C. Moore, \emph{Acyclic models and fibre spaces},
  Trans. Amer. Math. Soc. \textbf{85} (1957), 265--306. \MR{0086301}

\bibitem{HalperinLemaire}
S.~Halperin and J.-M. Lemaire, \emph{Suites inertes dans les alg\`ebres de
  {L}ie gradu\'ees (``{A}utopsie d'un meurtre. {II}'')}, Math. Scand.
  \textbf{61} (1987), no.~1, 39--67. \MR{929396 (89e:55022)}

\bibitem{Hepworth2}
Richard~A. Hepworth, \emph{String topology for complex projective spaces},
  preprint, arXiv:0908.1013v1.

\bibitem{Hepworth1}
\bysame, \emph{String topology for {L}ie groups}, J. Topol. \textbf{3} (2010),
  no.~2, 424--442. \MR{2651366}

\bibitem{Lemaire}
J.-M. Lemaire, \emph{``{A}utopsie d'un meurtre'' dans l'homologie d'une
  alg\`ebre de cha\^\i nes}, Ann. Sci. \'Ecole Norm. Sup. (4) \textbf{11}
  (1978), no.~1, 93--100. \MR{0500930 (58 \#18423)}

\bibitem{McCleary}
J.~McCleary, \emph{Homotopy theory and closed geodesics}, Homotopy theory and
  related topics ({K}inosaki, 1988), Lecture Notes in Math., vol. 1418,
  Springer, Berlin, 1990, pp.~86--94. \MR{1048178 (91e:57060)}

\bibitem{Menichi1}
L.~Menichi, \emph{String topology for spheres}, Comment. Math. Helv.
  \textbf{84} (2009), no.~1, 135--157, With an appendix by Gerald Gaudens and
  Menichi. \MR{2466078 (2009k:55017)}

\bibitem{Moore}
J.C. Moore, \emph{The double suspension and {$p$}-primary components of the
  homotopy groups of spheres}, Bol. Soc. Mat. Mexicana (2) \textbf{1} (1956),
  28--37. \MR{0092967 (19,1188f)}

\bibitem{Seeliger}
N.~Seeliger, \emph{Cohomology of the free loop space of a complex projective
  space}, Topology Appl. \textbf{155} (2007), no.~3, 127--129. \MR{2370366
  (2008m:55002)}

\bibitem{Shih}
W.~Shih, \emph{Homologie des espaces fibr\'es}, Inst. Hautes \'Etudes Sci.
  Publ. Math. (1962), no.~13, 88. \MR{0144348 (26 \#1893)}

\bibitem{Tam06}
H.~Tamanoi, \emph{Batalin-{V}ilkovisky {L}ie algebra structure on the loop
  homology of complex {S}tiefel manifolds}, Int. Math. Res. Not. (2006), Art.
  ID 97193, 23. \MR{2211159 (2006m:55026)}

\bibitem{Westerland}
C.~Westerland, \emph{String homology of spheres and projective spaces}, Algebr.
  Geom. Topol. \textbf{7} (2007), 309--325. \MR{2308947 (2008h:55010)}

\bibitem{Whitehead}
G.W. Whitehead, \emph{Elements of homotopy theory}, Graduate Texts in
  Mathematics, vol.~61, Springer-Verlag, New York, 1978. \MR{516508
  (80b:55001)}

\bibitem{Tian}
Tian Yang, \emph{A {B}atalin-{V}ilkovisky algebra structure on the {H}ochschild
  cohomology of truncated polynomials}, Topology Appl. \textbf{160} (2013),
  no.~13, 1633--1651. \MR{3091339}

\bibitem{Ziller}
W.~Ziller, \emph{The free loop space of globally symmetric spaces}, Invent.
  Math. \textbf{41} (1977), no.~1, 1--22. \MR{0649625 (58 \#31198)}

\end{thebibliography}

\end{document}